\documentclass[12pt]{article}
\usepackage[top=2.54cm, bottom=2.54cm, left=2.70cm, right=2.70cm]{geometry}
\usepackage[tbtags]{amsmath}
\usepackage{amssymb}
\usepackage{amsthm}
\usepackage{fancyhdr}
\usepackage{latexsym}
\usepackage{mathrsfs}
\usepackage{wasysym}
\usepackage{float}
\usepackage{graphicx}
\usepackage[numbers,sort&compress]{natbib}
\usepackage{setspace}
\usepackage{xcolor}
\allowdisplaybreaks[4]

\newtheorem{theorem}{\bf Theorem}[section]
\newtheorem{lemma}[theorem]{Lemma}

\newtheorem{problem}[theorem]{Problem}

\newtheorem{conj}[theorem]{Conjecture}
\newtheorem{claim}{Claim}
\newtheorem{observation}[theorem]{Observation}

\begin{document}
\begin{spacing}{1.1}
\title{Oriented paths with two blocks in bipartite oriented graphs}
\author{Bin Chen$^a$, \, Meishuang Chen$^b$\thanks {Email address: cmsfzu07@163.com (M. Chen)}, \, Xinmin Hou$^{c,d}$, \, Xinyu Zhou$^e$\\
\small$^a$School of Mathematics and Statistics, Fuzhou University, Fujian, China\\
\small$^b$Center for Discrete Mathematics, Fuzhou University, Fujian, China\\
\small $^c$School of Mathematical Sciences,\\
\small  University of Science and Technology of China, Hefei 230026, Anhui, China\\
\small$^d$ Hefei National Laboratory,\\
\small University of Science and Technology of China, Hefei 230088, Anhui, China\\
\small$^e$ College of Science,\\
\small Southwest Petroleum University, Chengdu 610500, Sichuan, China\\}
\date{}
\maketitle
{\bf Abstract}\,: 
Stein conjectured that for any integer $k\geq 2$, every oriented graph with minimum semidegree greater than $k/2$ contains every orientation of a path with $k$ edges. Recently, Chen, Hou and Zhou proved this conjecture to be true for any oriented path with two blocks, where a block of an oriented path is a maximal directed subpath within it. In this paper, we prove that every bipartite oriented graph with minimum semidegree at least $3k/8+2$ contains every oriented path with two blocks of length $k$ for $k\ge 2$. Moreover, in contrast to the general oriented setting, we highlight that the minimum semidegree threshold in the bipartite setting is closely related to the number of blocks.

\noindent\textbf{AMS subject classifications:} 05C20; 05C38.

\noindent\textbf{Keywords:} Bipartite oriented graph, minimum semidegree, oriented path.

\section{Introduction}

\noindent

One of the most fundamental and popular topics in graph theory is to guarantee the existence of long paths in a graph with degree constraints. This line of research can be traced back to the work of Dirac in 1952 \cite{Dirac1952} in which he determined an optimal bound with respect to the minimum degree that forces a graph to contain a hamiltonian path, i.e., a path that visits every vertex of the graph. For directed graphs\;(or digraphs), the natural analogue is to seek sufficient degree conditions that ensure the existence of oriented paths of specified types and lengths.

The minimum semidegree condition often plays an important role in such problems, where the minimum semidegree  $\delta^0(D)$ of a digraph $D$ is the smallest number between its minimum out-degree and  minimum in-degree. An oriented graph is a digraph containing no directed cycle of length at most two. A classical result of Jackson \cite{Jackson1981} asserts that for any positive integer $k$, every oriented graph of minimum semidegree at least $k$ admits a directed path of length $2k$. Researchers want to understand more relevant information about the relationship between other types of oriented paths and the minimum semidegree. In 2020, Stein \cite{Stein2020} proposed the following conjecture.

\begin{conj}[Stein~\cite{Stein2020}]\label{conj1}
For any positive integer $k$, every oriented graph $D$ with $\delta^0(D)>k/2$ contains any oriented path of length $k$.
\end{conj}

Note first that the above threshold is best possible due to the existence of a disjoint union of regular tournaments on $k$ vertices. Although Conjecture~\ref{conj1} has attracted a great deal of attention from many scholars, it remains widely open. Fortunately, there are a number of results in support of this conjecture. Let us now give several known results regarding Conjecture~\ref{conj1}. From a result of Kelly \cite{Kelly2011} one can derive that Conjecture~\ref{conj1} is true for $k\geq 3n/4+o(n)$ when $n$ is sufficiently large. An \emph{anti-directed path} is an oriented path in which every internal vertex has out-degree 0 or in-degree 0. In \cite{Stein2024}, Stein and Z\'{a}rate-Guer\'{e}n  gave an approximate result for this type of oriented paths. Concretely, they showed that for all $\eta\in (0, 1)$, there exists $n_0$ such that for all $n\ge n_0$ and $k\ge \eta n$, every oriented graph on $n$ vertices with minimum semidegree larger than $(1 +\eta)k/2$ contains every anti-directed path of length $k$.

Both of the above conclusions are true when $n$ is large enough. Let us next present some precise results regarding Conjecture~\ref{conj1}, which is true for all $n$ rather than requiring $n$ to be sufficiently large. It is known that Conjecture~\ref{conj1} is true for directed paths due to Jackson  \cite{Jackson1981}. In 2023, Klimo\u{s}ov\'{a} and Stein \cite{Klimosova2023} proved that for any $k\geq 3$, every oriented graph of minimum semidegree at least $(3k-2)/4$ contains any anti-directed path of length $k$. Subsequently, the lower bound was reduced by Chen, Hou and Zhou \cite{Chen2025} to $(2k+1)/3$, by Skokan and Tyomkyn \cite{Skokan2025} to $5k/8$, as well as by Grzesik and Skrzypczyk \cite{Grzesik2025} to $(k-1+\sqrt{k-3})/2$. Very recently, Gao and Lo \cite{GL} confirmed Conjecture~\ref{conj1} for anti-directed paths.

Apart from directed paths and anti-directed paths, the oriented paths with two blocks are usually another widely studied object because their structures are not particularly complex, where a block of an oriented path is a maximal directed subpath within it. It is worth mentioning that this type of oriented paths have been studied extensively under chromatic number condition, see, e.g., \cite{ElSahili2004,Addario2007,ElSahiliKouider2007}. In the semidegree setting,  Penev, Taruni, Thomassé, Trujillo-Negrete and Tyomkyn \cite{PTTTT}, and Chen, Hou and Zhou \cite{Chen20252} have independently investigated this type of oriented paths for Conjecture~\ref{conj1}. In 2025, Chen, Hou and Zhou~\cite{Chen20252} verified that this conjecture holds for oriented paths with two blocks.

There are also several interesting conclusions in bipartite oriented graphs.
In 1982, Ayel~\cite{Ayel1982} initiated this line of
research by investigating similar problems in general bipartite oriented graphs, and showed that every oriented bipartite graph of minimum semidegree at least $k$ admits either a directed cycle of length at least $4k$ 
or a directed path of length at least $4k+1$. Subsequently, Zhang \cite{Zhang1987} verified that under the same condition there must be either a directed cycle of length at least $4k$ 
or a directed path of length at least $4k+3$. Actually, the mentioned two results are straightforward corollaries of the original results which were studied under Dirac's condition.

Motivated by the above work, we will explore oriented paths with two blocks in bipartite oriented graphs under minimum semidegree conditions. Our main result is as follows.
\begin{theorem}\label{A}
Let $k\ge 2$ be an integer and let $D$ be a bipartite oriented graph
with $\delta^0(D)\ge 3k/8+2$. Then $D$ contains any oriented path with two blocks of length $k$.
\end{theorem}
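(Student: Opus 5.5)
Let the path be $P=P(k_1,k_2)$, consisting of a directed block of length $k_1$ followed by a block of length $k_2$ pointing the opposite way, with $k_1+k_2=k$ and $k_1\ge k_2\ge 1$. By reversing all arcs of $D$ if necessary, we may assume that $P$ starts with a forward block $u_0\to u_1\to\cdots\to u_{k_1}$ and then returns $u_{k_1}\leftarrow w_1\leftarrow\cdots\leftarrow w_{k_2}$. So $P$ is two directed paths ending at a common sink $u_{k_1}$. Let $\delta=\delta^0(D)\ge 3k/8+2$, and let the bipartition be $(X,Y)$.

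\textbf{Step 1: long directed paths.} By the bipartite results of Ayel and Zhang (via Jackson-type arguments), $D$ contains a directed path of length at least $4\delta-O(1)\ge 3k/2$, or a directed cycle of length at least $4\delta$. In either case there is a directed path of length at least $3k/2$, and this far exceeds $k_1\le k-1$.

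\textbf{Step 2: choose a longest directed path and hook the second block.} Take a longest directed path $Q=v_0v_1\cdots v_m$, with $m\ge 3k/2$. Maximality gives $N^+(v_m)\subseteq V(Q)$. In a bipartite oriented graph these out-neighbours lie at indices of parity opposite to $m$. The plan is to embed the forward block as a segment $v_{m-k_1}\cdots v_m$ ending at $v_m$, or at another suitable vertex $v_j$. The backward block then has to be built as a directed path into $v_m$ that avoids this segment.

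Consider in-neighbours of the sink. The vertex $v_m$ has at least $\delta$ in-neighbours, and only one of them is $v_{m-1}$ on the segment. If $k_2$ is small, we build the backward block greedily. Each new vertex needs an in-neighbour outside the already used set of size $\le k$. Since the used set meets only one side of the bipartition in about $k/2$ vertices, semidegree $\delta>3k/8$ alone does not suffice. Instead we use $Q$ itself: the in-neighbours of $v_m$ on $Q$ at positions $v_i$ with $i<m-k_1$ give the backward path $v_{i-k_2+1}\cdots v_i\to v_m$, reusing $Q$'s own directed structure. This works when some in-neighbour $v_i$ with $k_2-1\le i<m-k_1$ exists and $k_2\le i+1$. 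The alternative is an in-neighbour outside $Q$ followed by greedy extension.

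\textbf{Step 3: counting.} We need to show that one of these placements exists. Count the in-neighbours of $v_m$ (and symmetrically the out-neighbours of $v_0$, after reversing the roles of the blocks) among positions of the right parity. The window $[m-k_1,m]$ contains at most $k_1/2+1$ such positions, and the initial window $[0,k_2-2]$ contains at most $k_2/2$. In total these are at most about $k/2+1$, which exceeds $\delta$, so simple counting fails. We therefore refine by shifting the sink. Use any $v_j$ with $j\ge k_1$ as the sink and the segment $v_{j-k_1}\cdots v_j$ as the forward block. A backward path can then come from $v_{j'}\to v_j$ for some $j'<j-k_1$ or $j'>j$, where for $j'>j$ we use a directed subpath of $Q$ after $j$ or some other directed path. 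Averaging over the choices of $j$ and using $m\ge 3k/2$ should give the $3/8$ constant. The intuition is that the parity restriction halves the counts, so the bad set has size about $(k_1+k_2)/2\cdot\frac{3}{4}$ after exploiting the extra length $m-k\ge k/2$.

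\textbf{Main obstacle.} The main obstacle is Step 3 in the case where all relevant neighbours of the endpoints are clustered inside the forbidden windows. Here I would try a rotation (P\'osa-type) argument. An arc $v_m\to v_i$ lets us reroute $Q$ into a new longest path or cycle, and cycles of length $\ge 4\delta>3k/2$ allow any starting point. This reduces the problem to the cycle case, where any in-chord $v_i\to v_j$ with $i$ outside the arc $v_{j-k_1}\cdots v_j$ gives $P$ immediately by walking backward along the cycle from $v_i$. If all in-neighbours of every $v_j$ lie within distance $k_1$ before it, then each vertex has at least $\delta$ in-neighbours of one parity in a window of length $k_1$. For $k_1$ near $k$ this forces dense local structure. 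In that structure the short backward block can be found directly, since $k_2\le k/2$ and the number of available neighbours ($\ge 3k/8$) exceeds the constraints of parity and used vertices.
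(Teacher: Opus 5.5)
Your proposal has a genuine gap: the step that has to produce the path is never carried out.

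\textbf{Step 3 is not an argument, and it counts the wrong neighbourhood.} You concede that the direct count fails. The replacement (``averaging over the choices of $j$ \dots should give the $3/8$ constant'') is a hope, not a computation. The last sentence of your ``main obstacle'' paragraph asserts that $\ge 3k/8$ neighbours beat the parity and used-vertex constraints. That is exactly the greedy bound you correctly refuted in Step~2, since greedy extension needs about $k/2$.

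Maximality of $Q=v_0\cdots v_m$ confines $N^+(v_m)$ and $N^-(v_0)$ to $V(Q)$. You instead count $N^-(v_m)$ and $N^+(v_0)$ ``among positions of the right parity'' on $Q$. All in-neighbours of your sink $v_m$ may lie off $Q$, which throws you back onto the greedy extension that fails. The paper avoids this by placing the common sink at an \emph{out}-neighbour $v_h$ of the terminal vertex, which must lie on $P$. Then $v_1\cdots v_h$ and $v_{h+1}\cdots v_\alpha v_h$ are two directed paths into $v_h$, and any out-neighbour of $v_\alpha$ in the middle segment already gives $P(s,t)$ (Lemma~\ref{lemma}(i)).

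\textbf{The rotation fallback does not work as stated.} For a directed path, a single arc $v_m\to v_i$ with $i>0$ gives no new spanning path: $Q$ together with this arc has $Q$ as its only hamiltonian path. The arc merely closes the cycle $v_i\cdots v_m$, which is long only when $i$ is small. So the reduction to the cycle case is unjustified. On a cycle that does not span a longest path, vertices may have all their neighbours off the cycle, so in-chords need not exist at all. Even on a long cycle, an in-chord $v_i\to v_j$ with $v_i$ outside $v_{j-k_1}\cdots v_j$ need not work. If $v_i$ sits just after $v_j$, walking back from $v_i$ reaches $v_j$ in fewer than $k_2$ steps.

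\textbf{What is missing} is the interplay between the two confined neighbourhoods $N^-(v_1)$ and $N^+(v_\alpha)$. The paper splits according to whether $v_1$ and $v_\alpha$ lie in the same class.

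In the different-class case with $s\le 2(k-1)/3$, a count using $s\ge k/2$ forces an in-neighbour $v_{i'}$ of $v_1$ with $i'\ge\alpha-k+\lceil 3s/2\rceil$. This is precisely where $d>3k/8+1$ is used. Combining the in-neighbours of $v_{i'+1}$ with the out-neighbours of $v_\alpha$ then yields one of three outcomes: an off-path in-neighbour handled by Lemma~\ref{lemma}(ii), a spanning cycle on $V(P)$ handled by Lemma~\ref{lemma}(iii), or a direct $P(s,t)$.

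In the same-class case, a parity-shifting count on $d^-(v_1)+d^+(v_\alpha)\ge 2d$ yields an even $q$ with $v_\alpha\to v_q$ and $v_{q-2}\to v_1$. This \emph{pair} of arcs is what makes a directed rotation possible. It produces two spanning longest paths with $v_{q-1}$ as the initial vertex of one and the terminal vertex of the other (Claim~\ref{cl:switch}). That is impossible, because such a vertex would have all its $\ge 2d$ neighbours on $P$ while being nonadjacent to the middle sets (Claim~\ref{cl:no-common-end}).
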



The rest of this paper is arranged as follows. In the next section, we introduce some notation and present several auxiliary results. The proof of
Theorem~\ref{A} will be provided in Section 3. We conclude this paper with some remarks in the last section.

\section{Preliminaries}

\noindent

For positive integers $\alpha$ and $\beta$, let $[\alpha,\beta]=\{\alpha, \alpha+1, \dots, \beta\}$, particularly, we use $[\alpha]$ to mean $[1,\alpha]$. 
Let $D=(V,A)$ be a digraph.  For an arc $(x,y)\in A(D)$, we write
$x\to y$, and we say $x$ and $y$ to be its \emph{tail} and \emph{head}, respectively, sometimes we also use $xy$ to mean $(x,y)$.  If $(x,y)\notin A(D)$, then we write $x\nrightarrow y$. We use $x\sim y$ to mean $x$ and $y$ are \emph{adjacent}, namely, there is an arc connecting them, and $x\nsim y$ to mean $x$ and $y$ are nonadjacent. Moreover, we use $x\nsim Y$ to mean that $x$ and $y$ are nonadjacent for any $y\in Y$, where $Y$ is a subset of $V(D)$.

The
\emph{out-neighborhood} and \emph{in-neighborhood} of a vertex $x$ are $
 N_D^+(x)=\{y\in V(D):x\to y\}$ and
 $N_D^-(x)=\{y\in V(D):y\to x\}$,
and its \emph{out-degree} and \emph{in-degree} are $d_D^+(x)$ and $d_D^-(x)$.  We omit the subscript $D$ if there is no confusion.  The vertices within $N^{+}_{D}(x)$ and $N^{-}_{D}(x)$ are \emph{out-neighbors} and \emph{in-neighbors} of $x$, respectively. Denote by $\delta^{+}(D)$ and $\delta^{-}(D)$ the \emph{minimum out-degree} and \emph{minimum in-degree} of $D$. Let $\delta^{0}(D)=\text{min}\{\delta^{+}(D),\delta^{-}(D)\}$ be the \emph{minimum semidegree} of $D$. Let us denote $\delta(D)$ to be the \emph{minimum degree} of $D$, which is the minimum value among the sum of $d^{+}(x)$ and $d^{-}(x)$ for any $x\in V(D)$.

A \emph{directed path} $P$ in $D$ is a sequence of $h$ distinct vertices $x_{1},x_{2},\ldots,x_{h}$ such that $x_{i}\rightarrow x_{i+1}$ for any $i\in[h-1]$, and we denote $\ell(P)$ to be its length. The vertex $x_1$
  is called the \emph{initial vertex} of $P$, and 
$x_h$ is called the \emph{terminal vertex} of 
$P$. Similarly, a \emph{directed cycle} $C$ in $D$ is a sequence of $h$ distinct vertices $y_{1},y_{2},\ldots,y_{h}$ such that $y_{i}\rightarrow y_{i+1}$ for any $i\in[h-1]$ and $y_h\to y_1$. For convenience, let $\langle y_1,y_2,\ldots, y_h\rangle$ be such a directed cycle, and let $\ell(C)$ be its length. A hamiltonian directed path\;(resp., cycle) of $D$ is a directed path\;(resp., cycle) passing through all vertices of $D$. Given integers $s$ and $t$, let $P(s,t)$ be an oriented path which consists of $s$ consecutive forward arcs followed by $t$ consecutive backward arcs. Observe that $P(s,t)$ is isomorphic to $P(t,s)$. Symmetrically, we denote $Q(s,t)$ to be an oriented path consisting of $s$ consecutive backward arcs followed by $t$ consecutive forward arcs. Clearly, $Q(s,t)$ is isomorphic to $Q(t,s)$. Obviously, every oriented path with two blocks is isomorphic to $P(s,t)$ or $Q(s,t)$ for some integers $s,t$. For two directed paths $P$ and $Q$ each of which has length at least one with a unique common terminal vertex, we denote $P+Q$ to denote the oriented path obtained by joining them at this common vertex. Notice that $P+Q$ is an oriented path with two blocks of type $P(s,t)$ for some  integers $s$ and $t$.

Recall that an oriented graph is a digraph containing no directed cycles of length at most two. A digraph $D$ is \emph{bipartite} if its vertex set admits a
partition $V(D)=V_1\cup V_2$ such that every arc has one end in $V_1$
and another in $V_2$. For a vertex set $X\subseteq V(D)$, we write
$D[X]$ for the subdigraph induced by $X$, and let
$D-X=D[V(D)\backslash X]$.

Next, we would like to give several auxiliary results. The coming lemma due to Zhang \cite{Zhang1987} tells us one can find a long directed path in bipartite oriented graphs with large minimum semidegree.

\begin{lemma} [\cite{Zhang1987}] \label{longpc}
 Let $D$ be a bipartite oriented graph with $\delta^0(D)\ge k$. Then $D$ contains a  directed path of length $4k-1$.
\end{lemma}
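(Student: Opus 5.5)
The plan is to use a longest-path argument, exploiting bipartiteness to double the usual counting bound. Let $P=x_0x_1\cdots x_m$ be a longest directed path in $D$. The goal is to show $m\ge 4k-1$. By maximality every out-neighbor of $x_m$ and every in-neighbor of $x_0$ lies on $P$. Since $D$ is bipartite, the vertices of $P$ alternate between $V_1$ and $V_2$, so all out-neighbors of $x_m$ occupy positions of parity different from $m$. Since $D$ is oriented, $x_{m-1}$ is not an out-neighbor of $x_m$. Let $x_i$ be the out-neighbor of $x_m$ with smallest index. The $k$ out-neighbors of $x_m$ are then $k$ distinct positions in $\{i,i+2,\dots,m-3\}$, so $m-3-i\ge 2(k-1)$, i.e.\ $i\le m-2k-1$, and $C_1=x_ix_{i+1}\cdots x_mx_i$ is a directed cycle of length at least $2k+2$. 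Symmetrically, let $x_j$ be the in-neighbor of $x_0$ with largest index. Then $j\ge 2k+1$, and $C_0=x_0\cdots x_jx_0$ is a cycle of length at least $2k+2$.

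If $j<i$, then $m\ge i+2k+1\ge j+2k+2\ge 4k+3$ and we are done. The remaining case is $i\le j$, where the two "end cycles" overlap. Here I would switch from the longest path to a longest directed cycle in a suitable strong component and run a Jackson/Ayel-type rotation argument. Let $C$ be a longest cycle of $D$; the argument above shows $\ell(C)\ge 2k+2$. If some vertex outside $C$ is joined to $C$, a directed path that enters or leaves $C$ can be concatenated with a traversal of $C$ of length $\ell(C)-1$. It therefore suffices to show that either $\ell(C)\ge 4k$, or there is a path of length about $2k$ hanging off $C$ whose attachment does not destroy too much of $C$.

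Concretely, I would fix the path $P$ and use the overlap $i\le j$ to build a path longer than $P$ or a cycle through all of $V(P)$. The candidates are pairs of arcs $x_m\to x_a$ and $x_b\to x_0$ with $a\le b$; these give the cycles $x_a\cdots x_m x_a$ and $x_0\cdots x_b x_0$, which share the segment $x_a\cdots x_b$. When $a\le b$ is forced, I expect a pigeonhole argument on the $k$ out-positions of $x_m$ and the $k$ in-positions of $x_0$, all in fixed parity classes of an interval of length at most $m$, to show this. I would use the $2k$ arcs to the interior vertices to reroute and get a spanning cycle on $V(P)$. With a spanning cycle, strong connectivity (or another longest-path choice if no arc leaves $V(P)$) gives either a longer path, contradicting maximality, or that $V(P)$ is a closed set in which every vertex has out- and in-degree at least $k$ inside a bipartite oriented graph on $m+1$ vertices. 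In the latter case each part has at least $k$ vertices on each side and the oriented condition forces $m+1\ge 4k$: a vertex in $V_1$ has $2k$ distinct neighbors in $V_2$, so $|V_2|\ge 2k$, and likewise $|V_1|\ge 2k$.

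The main obstacle is the crossing case $i\le j$: producing a spanning cycle, or a strictly longer path, from overlapping end cycles. Pure parity counting alone only yields about $2k$, so the rerouting must genuinely use both endpoints simultaneously. The first thing I would try is choosing, among all longest paths, one maximizing $i$ (equivalently minimizing $\ell(C_1)$). Then an arc from $x_m$ to $x_{a}$ with $x_{a-1}\to x_0$, or its parity-shifted analogue, yields a rotated longest path whose first out-neighbor index is larger, contradicting the extremal choice.
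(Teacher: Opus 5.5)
The paper does not prove this lemma; it is quoted from Zhang~\cite{Zhang1987}, where it follows from his theorem that such a $D$ has a directed cycle of length at least $4k$ or a directed path of length at least $4k+3$. Judged on its own, your proposal has a genuine gap.

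\textbf{What you carry out is correct.} The parity count gives $i\le m-2k-1$ and $j\ge 2k+1$. The non-crossing case $j<i$ gives $m\ge 4k+3$, which matches the $4k+3$ in Zhang's theorem. Your closing step is also sound: a hamiltonian directed cycle on $V(P)$, together with maximality of $P$, forbids every arc between $V(P)$ and $V(D)\setminus V(P)$. Then each vertex of $V(P)$ has $2k$ distinct neighbors in the opposite class inside $V(P)$, so $|V(P)|\ge 4k$.

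\textbf{The crossing case is not done.} The case $i\le j$ carries the entire difficulty of the lemma. There you only describe what you would try (``I expect a pigeonhole argument'', ``I would use the $2k$ arcs \ldots\ to reroute''). No step actually produces a longer path or a spanning cycle of $V(P)$.

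\textbf{The mechanism you rely on does not survive orientation.} It is the undirected Dirac/P\'osa crossing trick. There, $x_m\sim x_a$ and $x_0\sim x_{a+1}$ close $P$ into one cycle by traversing $x_m\cdots x_{a+1}$ backwards, which is impossible for directed arcs.
\begin{itemize}
\item A directed crossing pair $x_m\to x_a$, $x_{a-1}\to x_0$ instead splits $V(P)$ into the two disjoint cycles $x_0\cdots x_{a-1}x_0$ and $x_a\cdots x_m x_a$.
\item A pair $x_m\to x_a$, $x_b\to x_0$ with $a\le b$ yields rerouted hamiltonian paths of $D[V(P)]$, such as $x_{b+1}\cdots x_m x_a\cdots x_b x_0\cdots x_{a-1}$, but not a hamiltonian cycle.
\item The overlap of $C_0$ and $C_1$ does make $D[V(P)]$ strongly connected. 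That is not enough to absorb an outside neighbor $y$: you would need a hamiltonian path of $D[V(P)]$ starting (or ending) at the specific vertex $y$ is joined to. So you cannot exclude arcs between $V(P)$ and the rest, and your closedness count cannot be applied.
\end{itemize}

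\textbf{The extremal fallback is only asserted.} For a longest path maximizing $i$, you do not say which rotated path arises. You also do not explain why its terminal vertex's first out-neighbor index must exceed $i$. After rotation the indices change, and nothing you know controls the new terminal's out-neighborhood.

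What is missing is a genuinely directed argument for the crossing case, as in Ayel's and Zhang's proofs, that forces either a directed cycle of length at least $4k$ or a directed path of length at least $4k-1$.
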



Also, we need the following two simple but very useful observations, and we omit the proofs here.

\begin{observation}\label{obs:endpoints}
Let $D$ be a bipartite oriented graph and let
$P=v_1v_2\ldots v_\ell$ be a longest directed path in $D$. Then $N^-(v_1)\cup N^+(v_\ell)\subseteq V(P)$.
\end{observation}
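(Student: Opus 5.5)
The plan is a standard maximality argument: assume a neighbour of an endpoint lies off $P$ and extend $P$ by that vertex, contradicting the choice of $P$. No semidegree assumption or earlier lemma is needed; only the definition of a directed path (a sequence of distinct vertices with consecutive arcs) is used.

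First I treat $N^-(v_1)$. Suppose some $u\in N^-(v_1)$ satisfies $u\notin V(P)$. Then $u\to v_1$, and $u$ is distinct from every $v_i$. So the sequence $u,v_1,v_2,\ldots,v_\ell$ consists of distinct vertices with $u\to v_1$ and $v_i\to v_{i+1}$ for all $i\in[\ell-1]$. Hence it is a directed path in $D$ of length $\ell(P)+1$, which contradicts the assumption that $P$ is a longest directed path. Therefore $N^-(v_1)\subseteq V(P)$.

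The argument for $N^+(v_\ell)$ is symmetric. If some $w\in N^+(v_\ell)$ had $w\notin V(P)$, then $v_1,\ldots,v_\ell,w$ would be a directed path of length $\ell(P)+1$, which is again a contradiction. Therefore $N^+(v_\ell)\subseteq V(P)$, and taking the union gives the observation.

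There is no real obstacle here. The only point to check is that the extended sequence has distinct vertices, and this is exactly the assumption that the added vertex lies outside $V(P)$. The hypothesis that $D$ is a bipartite oriented graph is not needed for this argument; it is recorded only because it is the setting in which the observation will be applied. Bipartiteness does additionally imply that such in- and out-neighbours lie on $P$ at positions of the opposite parity to $v_1$ and $v_\ell$ respectively, which is presumably how the observation will be used later.
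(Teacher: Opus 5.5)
Your proof is correct: prepending an outside in-neighbour of $v_1$ (or appending an outside out-neighbour of $v_\ell$) gives a strictly longer directed path, contradicting maximality, and you are right that bipartiteness is not needed. The paper omits the proof of this observation, but this maximality argument is the one it implicitly relies on (for instance, when it writes $N^-(v_1)\subseteq V(P)\backslash\{v_1\}$ ``by the maximality of $P$'').
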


\begin{observation}\label{sub-path}
Let $s^*,s,t^*,t\geq 0$ be integers with $s^*\geq s$ and $t^*\geq t$. Then every $P(s^*,t^*)$ contains a copy of $P(s,t)$.
\end{observation}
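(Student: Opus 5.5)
We exhibit the copy of $P(s,t)$ directly as a subpath of $P(s^*,t^*)$, by cutting off surplus arcs at each end.

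Write $P(s^*,t^*)$ as $x_0x_1\ldots x_{s^*}y_1\ldots y_{t^*}$, where $x_{i-1}\to x_i$ for $i\in[s^*]$ (the forward block) and, putting $y_0=x_{s^*}$, we have $y_j\to y_{j-1}$ for $j\in[t^*]$ (the backward block). Thus $x_{s^*}$ is the common terminal vertex of the two directed blocks, i.e.\ $P(s^*,t^*)=P+Q$ in the notation of the preliminaries.

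Now let $R$ be the subpath running from $x_{s^*-s}$ to $y_t$, that is, $R=x_{s^*-s}x_{s^*-s+1}\ldots x_{s^*}y_1\ldots y_t$. Both indices make sense since $s^*-s\ge 0$ and $t\le t^*$.

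\begin{itemize}
\item The first $s$ arcs of $R$ are $x_{s^*-s}\to\cdots\to x_{s^*}$, all forward.
\item The remaining $t$ arcs are $x_{s^*}=y_0\leftarrow y_1\leftarrow\cdots\leftarrow y_t$, all backward.
\end{itemize}

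So $R$ consists of $s$ consecutive forward arcs followed by $t$ consecutive backward arcs. Its vertices are distinct because they are vertices of the path $P(s^*,t^*)$. Hence $R$ is a copy of $P(s,t)$.

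The degenerate cases need only a remark. If $s=0$, then $R$ starts at $x_{s^*}$ and is the directed path $y_t\to\cdots\to y_0$. If $t=0$, then $R$ ends at $x_{s^*}$. If $s=t=0$, then $R$ is the single vertex $x_{s^*}$. In each case $R$ is still $P(s,t)$ by definition.

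There is no real obstacle here. The only care needed is to place the cut so that it keeps the turning vertex $x_{s^*}$; removing arcs from the outer ends of the two blocks is what keeps $x_{s^*}$ inside $R$.
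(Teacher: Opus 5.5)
Your argument is correct and is precisely the evident one the paper has in mind when it omits the proof: keep the turning vertex $x_{s^*}$, and discard $s^*-s$ arcs from the outer end of the forward block and $t^*-t$ arcs from the outer end of the backward block. One cosmetic slip is your remark identifying $P(s^*,t^*)$ with $P+Q$, since that notation in the preliminaries requires both blocks to have length at least one; nothing in your proof depends on it.
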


\begin{observation}\label{lem:greedy}
Let $D$ be an oriented graph and let $Q$ be any orientation of a path with
$m\geq 1$ arcs. If $\delta^0(D)\ge m$, then $D$ contains a copy of $Q$.
\end{observation}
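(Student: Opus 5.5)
The plan is to embed $Q$ greedily, one vertex at a time, along the path from one end to the other. Write $Q=u_0u_1\ldots u_m$, where for each $i\in[m]$ the arc between $u_{i-1}$ and $u_i$ is either forward ($u_{i-1}\to u_i$) or backward ($u_i\to u_{i-1}$). We construct distinct vertices $v_0,v_1,\ldots,v_m$ of $D$ such that, for each $i$, the arc between $v_{i-1}$ and $v_i$ has the same direction as the arc between $u_{i-1}$ and $u_i$. Then $u_i\mapsto v_i$ is the required embedding.

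Start with an arbitrary vertex $v_0\in V(D)$. Suppose that for some $0\le i\le m-1$ the distinct vertices $v_0,\ldots,v_i$ have been chosen, with every arc of the initial segment $u_0\ldots u_i$ correctly represented. If the arc between $u_i$ and $u_{i+1}$ is forward, we must choose $v_{i+1}\in N^+(v_i)$; if it is backward, we must choose $v_{i+1}\in N^-(v_i)$.

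In either case the relevant neighbourhood has size at least $\delta^0(D)\ge m$. The vertices that must be avoided are $v_0,\ldots,v_{i-1}$, and there are $i\le m-1$ of them; $v_i$ itself lies in neither of its own neighbourhoods. Hence at least $m-i\ge 1$ admissible vertices remain, and we take any one of them as $v_{i+1}$. Because $v_{i+1}$ is new, the vertices $v_0,\ldots,v_{i+1}$ are still distinct, and the new arc has the prescribed direction.

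After $m$ steps we obtain the copy of $Q$. There is no real obstacle here. The only point needing care is the counting: at step $i+1$ at most $i\le m-1$ previously used vertices can block the choice, which is exactly why the hypothesis $\delta^0(D)\ge m$ (rather than $m-1$) suffices. The argument uses neither bipartiteness nor the absence of $2$-cycles, so it holds for any digraph without loops.
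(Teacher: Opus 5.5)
Your greedy embedding is correct: at step $i+1$ the prescribed neighbourhood of $v_i$ has at least $m$ vertices, at most $i\le m-1$ of them are already used, and so a new vertex is always available. The paper omits the proof of this observation, and your argument is the standard one it implicitly relies on; as you note, it uses neither bipartiteness nor the absence of $2$-cycles.
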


We end this section by showing the following lemma which will be frequently used in the proof of {\bf Part I} in Theorem~\ref{A}.

\begin{lemma}\label{lemma}
Let $s,t,k$ be positive integers with $s\geq t$ and $s+t=k$, and let $D$ be a bipartite oriented 
graph with $\delta^0(D)\ge t/2+1
$ such that $V(D)=V_1\cup V_2$.
Let $P=v_1v_2\ldots v_\ell$ be a longest directed path in $D$, and suppose
that $\ell\ge2s+1$.  Denote by
$
 X=\{v_1,\ldots,v_t\}$,
$ Y=\{v_{t+1},\ldots,v_{\ell-t}\}$ and
$ Z=\{v_{\ell-t+1},\ldots,v_\ell\}$.
Then $D$ contains a copy of
$P(s,t)$ if one of the following holds:

$(i).$ $N^-(v_1)\cap Y\ne\emptyset$ or
          $N^+(v_\ell)\cap Y\ne\emptyset$;

$(ii).$ $N^-(v_{i+1})\nsubseteq V(P)$ for some
$v_i\in N^-(v_1)\cap(Z\backslash\{v_\ell\})$;

$(iii).$ $D[V(P)]$ contains a hamiltonian directed cycle.
\end{lemma}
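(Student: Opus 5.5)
The plan is to build a copy of $P(s,t)$ as $P_1+P_2$ in the sense of the preliminaries: two internally disjoint directed paths of lengths $s$ and $t$ with a common terminal vertex $w$. So in each case I will look for a vertex $w$ with two distinct in-neighbours $a\ne b$. I then extend $a$ backwards along $P$ (or along a cycle made from $P$ and a returning arc) to get an in-path of length $s$, and extend $b$ backwards to get an in-path of length $t$. Throughout I use Observation~\ref{obs:endpoints}: $N^-(v_1)\cup N^+(v_\ell)\subseteq V(P)$. I also use that $\delta^0(D)\ge t/2+1>1$ gives every vertex at least two in- and out-neighbours. Finally, $\ell\ge 2s+1\ge k+1$ gives room for both paths.

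\emph{Case (i), $v_\ell\to v_i$ with $v_i\in Y$, i.e.\ $t+1\le i\le \ell-t$.} The common terminal vertex is $w=v_i$, with in-neighbours $v_{i-1}$ and $v_\ell$. If $i\ge s+1$, take $v_{i-s}\cdots v_i$ of length $s$ and $v_{\ell-t+1}\cdots v_\ell v_i$ of length $t$; they are disjoint because $i\le\ell-t$. If $i\le s$, swap the roles of the two paths: take $v_{i-t}\cdots v_i$ (possible since $i\ge t+1$) and $v_{\ell-s+1}\cdots v_\ell v_i$. Disjointness follows from $\ell-s+1\ge s+2>i$.

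\emph{Case (i), $v_i\to v_1$ with $v_i\in Y$.} This is not symmetric, since reversing arcs turns $P(s,t)$ into $Q(s,t)$. Here $C=v_1\cdots v_iv_1$ is a directed cycle of length $i\ge t+1$. I will use a second in-neighbour $v_j\ne v_i$ of $v_1$, which exists and lies on $P$ by the degree condition. I take $w=v_1$, with one in-path running backwards around $C$ through $v_i$ and the other running backwards along $P$ from $v_j$. Bipartiteness fixes the parity of $j$, which I will use to bound how close $v_j$ can be to $v_1$. The subcases are $j<i$ and $j>i$. When $j>i$, the in-path from $v_j$ can also be routed through $v_{i+1},\dots,v_j$ and then into $C$. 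I expect this case to be the main obstacle. If both in-paths get too short, I will instead use the vertex $w=v_{i+1}$, whose in-neighbour $v_i$ can be preceded by the whole cycle $C$. I will then combine this with a second in-neighbour of $v_{i+1}$, which lies on $P$ or else falls under case (ii).

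\emph{Case (ii).} Here $v_i\to v_1$ with $v_i\in Z\setminus\{v_\ell\}$, and $x\to v_{i+1}$ with $x\notin V(P)$. Take $w=v_{i+1}$. Its in-neighbour $v_i$ is the end of the directed path $v_{i+2-?}$-free route $v_{i+1}$-avoiding path around $C=v_1\cdots v_iv_1$. This gives in-paths into $v_{i+1}$ of every length up to $i\ge \ell-t+1\ge s+1$ through $v_i$. For the second block I use $x$, which has at least two in-neighbours. I extend backwards from $x$ greedily inside the part of the graph not yet used, or through $P$ if needed. Maximality of $P$ should force any such backward extension to be long enough, since otherwise $x$ together with $C$ and $v_{i+1}\cdots v_\ell$ would yield a directed path longer than $P$. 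Then I trim both paths to lengths $s$ and $t$ by Observation~\ref{sub-path}.

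\emph{Case (iii).} Let $C$ be a hamiltonian directed cycle of $D[V(P)]$, of length $\ell\ge k+1$. Every vertex of $D$ has in-degree at least $2$. Since $D[V(P)]$ is not merely $C$ (use $v_1$, whose in-neighbours all lie on $V(P)$), there is a chord $u\to w$ with $u,w\in V(C)$ and $u$ not the predecessor of $w$ on $C$. Let $d$ be the distance from $u$ to $w$ along $C$. One in-path into $w$ is the segment of $C$ ending at $w$. The other is a segment of $C$ ending at $u$, followed by the arc $uw$. Since $\ell\ge s+t+1$, I can choose the lengths $s$ and $t$ (or $t$ and $s$) so that the two segments of $C$ are disjoint, whichever side of $w$ the vertex $u$ lies on. This gives $P(s,t)$.
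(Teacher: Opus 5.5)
Your treatment of the first half of (i) ($v_\ell\to v_i\in Y$) is correct and matches the paper's. The second half ($v_i\to v_1$ with $v_i\in Y$) has a genuine gap: neither terminal vertex you propose works in general.

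\emph{Terminal vertex $w=v_1$.} Two disjoint in-paths obtained by walking backwards along $P$ from in-neighbours of $v_1$ occupy disjoint subintervals of $v_2,\dots,v_j$, where $v_j$ is the in-neighbour of $v_1$ of largest index. So their lengths sum to at most $j-1$. The semidegree condition at $v_1$ does not force $j>k$: when $s$ is large compared with $t$, the even indices in $[4,k]$ already offer more than $t/2+1$ candidates. Routing the $v_j$-path ``through $v_{i+1},\dots,v_j$ and then into $C$'' would have to use $v_i$, which the other in-path already occupies.

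\emph{Terminal vertex $w=v_{i+1}$.} The in-path through $v_i$ is just $v_1\cdots v_iv_{i+1}$; the cycle $C$ adds nothing, since $v_i$ is already the last vertex of $C$. Its length is $i$, and $v_i\in Y$ only gives $t+1\le i\le \ell-t$, so it can be shorter than $s$. Nor can you fall back on (ii): its hypothesis is $v_i\in Z$, its argument needs $\ell(C)\ge s+1$, and it only produces a second in-path of length about $t$.

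\emph{The missing idea} is to use the other end of $P$. By the first half you may assume $N^+(v_\ell)\cap Y=\emptyset$. Bipartiteness leaves at most $\lfloor t/2\rfloor$ candidates in $Z$, while $d^+(v_\ell)\ge t/2+1$, so some $v_j\in N^+(v_\ell)\cap X$ exists. Take $w=v_j$ and use $C$ to reach it. The path $v_{j+1}\cdots v_iv_1\cdots v_j$ has length $i-1\ge t$, and $v_{i+1}\cdots v_\ell v_j$ has length $\ell-i\ge t$. Since the two lengths sum to $\ell-1\ge 2s$, one of them is at least $s$.

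Case (iii) is also wrong as stated. An arbitrary chord $u\to w$ of the Hamiltonian cycle does not suffice, and $\ell\ge s+t+1$ is beside the point. If $u$ lies $m$ steps before $w$ on the cycle, the in-path into $w$ along the cycle can only use the $m-1$ vertices strictly between $u$ and $w$. The in-path through $u$ has length at most $\ell-m$. So you need $t+1\le m\le\ell-t$, and a short chord (e.g.\ $m=3$ with $t\ge 3$) gives nothing.

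Producing a chord of the right span needs a count, which is what the paper does. Relabel so that the cycle is $\langle v_1,\dots,v_\ell\rangle$. Then $v_\ell$ has at least $2\delta^0(D)\ge t+2$ neighbours, all in $V(P)$. Only $t$ vertices of $(X\cup Z)\setminus\{v_\ell\}$ lie in the class opposite to $v_\ell$, so $v_\ell$ has at least two neighbours in $Y$. By (i) these are in-neighbours. For the one $v_b$ of smallest index, the paths $v_{b+1}\cdots v_\ell$ and $v_1\cdots v_bv_\ell$ both have length at least $t+1$, with total length $\ell-1\ge 2s$.

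Your sketch of (ii) has the right ingredients but misplaces the mechanism. Maximality of $P$ only forbids the backward extension from entering $C$ directly, and only while that extension avoids $v_{i+2},\dots,v_\ell$; otherwise you cannot append $v_{i+1}\cdots v_\ell$. Maximality never forces length by itself. The length comes from taking a longest path $w_1\cdots w_r$ in $D-V(P)$ ending at $x$. If $r<t$, count the in-neighbours of $w_1$: at most $\lfloor r/2\rfloor$ lie on that path and none lie in $C$, so at least two lie in $\{v_{i+1},\dots,v_\ell\}$. Then route through $v_{i+2}\cdots v_j$ for the largest such $j$.
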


\begin{proof}
Since $s\geq t$ and $s+t=k$, we thus have $s\ge k/2$ and $t\le k/2$.
In addition, the assumption $\ell\ge 2s+1$ gives us that $\ell\ge k+1$.  From Observation~\ref{obs:endpoints} we know
$        N^-(v_1)\cup N^+(v_\ell)\subseteq V(P).
$
In the following, we may assume without loss of generality that $v_1\in V_1$.

\smallskip
\noindent
 $(i).$
Suppose first that  $N^+(v_\ell)\cap Y\neq\emptyset$, and let
$v_h\in N^+(v_\ell)\cap Y$. Denote by
\[
P_1=v_{h+1}\ldots v_\ell v_h \qquad \text{and} 
\qquad
P_2=v_1v_2\ldots v_h.
\]
Since $v_h\in Y$, we have $\ell(P_1)=\ell-h\ge t$ and $ \ell(P_2)=h-1\ge t$.
If $h-1\ge s$, then $\ell(P_2)\ge s$.
Otherwise, $h\le s$, and hence $\ell(P_1)=\ell-h \ge\ell-s \ge s+1.$
This indicates, by Observation~\ref{sub-path}, that $P_1+ P_2$ contains  a copy of $P(s,t)$.

Now suppose that $N^-(v_1)\cap Y\neq\emptyset$, and let $v_i\in N^-(v_1)\cap Y$. By the maximality of $P$, one can see that
$N^-(v_1)\subseteq V(P)\backslash\{v_1\}$. By the previous result, we may assume that $N^+(v_\ell)\cap Y=\emptyset$. Since $\delta^0(D)\ge t/2+1>|Z|/2$, there exists a vertex $v_j\in N^+(v_\ell)\cap X$. Denote by \[
P_1=v_{j+1}\ldots v_i v_1v_2\ldots v_j\qquad \text{and}
\qquad
P_2=v_{i+1}\ldots v_\ell v_j.
\]
As $X\subsetneq V(P_1)$ and $Z\subsetneq V(P_2)$, we can obtain that $\ell(P_1)\ge t$ and $\ell(P_2)\ge t$. If $i\ge s+1$, then
	$\ell(P_1)=i-1\ge s$. Otherwise, $\ell(P_2)=\ell-i\ge2s-s=s$. By Observation~\ref{sub-path}, we conclude that $P_1+ P_2$ contains a copy of $P(s,t)$.

\smallskip
\noindent
$(ii).$  Pick arbitrarily a vertex $v_i\in N^-(v_1)\cap(Z\backslash\{v_\ell\})$ such that $N^-(v_{i+1})\nsubseteq V(P)$. Let
$w_r\in N^-(v_{i+1})\backslash V(P)$, and let $P^*=w_1w_2\ldots w_r$
be a longest directed path in $D-V(P)$ terminating at $w_r$. Let us denote  $C=\langle v_1,v_2,\ldots ,v_i\rangle$. Since $v_i\in Z$, we can deduce that $\ell(C)>|X\cup Y|
=\ell-t
\ge2s+1-s
\ge s+1$.

When $r\ge t$, we denote \[
P_1=w_1w_2\ldots w_rv_{i+1}\qquad \text{and}
\qquad
P_2=v_1v_2\ldots v_{i+1}.
\]
Then $\ell(P_1)\ge t$ and
$\ell(P_2)=\ell(C)\ge s+1$.
It follows from Observation~\ref{sub-path} that
$P_1+ P_2$ contains  a copy of $P(s,t)$.

It remains to consider that $r<t$. By the maximality of $P^*$, we see that
$N^-(w_1)\subseteq V(P)\cup(V(P^*)\backslash\{w_1\})$. Suppose first that
$N^-(w_1)\cap V(C)\neq\emptyset$, and let
$v_a\in N^-(w_1)\cap V(C)$.
If $a\neq i$, then we define
\[
P^{**}=v_{a+1}v_{a+2}\ldots v_iv_1v_2\ldots v_a
w_1w_2\ldots w_r
v_{i+1}\ldots v_\ell.
\] If $a=i$, then we define
\[
P^{**}=v_1v_2\ldots v_i
w_1w_2\ldots w_r
v_{i+1}\ldots v_\ell.
\]
It is not hard to check that  $\ell(P^{**})>\ell(P)$,
contradicting the fact that $P$ is a longest directed path in $D$.
Therefore, we have $N^-(w_1)\cap V(C)=\emptyset$, which implies that
$N^-(w_1)\subseteq
(V(P)\backslash V(C))
\cup
(V(P^*)\backslash\{w_1\})$.

Recall that $D$ is a bipartite oriented graph. There are at most $\lfloor r/2\rfloor$ vertices in $V(P^*)\backslash\{w_1\}$  belonging to different part from where $w_1$ is located. Thereby, we can derive that 
$$
|N^-(w_1)\cap\{v_{i+1},\ldots,v_\ell\}|
\ge \delta^0(D)-\left\lfloor\frac r2\right\rfloor
\ge\lceil t/2+1\rceil
-\left\lfloor\frac{t-1}{2}\right\rfloor\ge2,$$ implying that
 $w_1$ has at least two in-neighbors in $\{v_{i+1},v_{i+2},\ldots,v_\ell\}$.
Let $v_j$ be a vertex with largest subscript in $N^-(w_1)$. Notice that $i+2\le j\le\ell$.

Denote by
\[
P_1=v_1v_2\ldots v_{i+1}\qquad \text{and}
\qquad
P_2=v_{i+2}\ldots
v_jw_1w_2\ldots w_rv_{i+1}.
\]

It is clear that
$\ell(P_2)\ge
2|N^-(w_1)|
\ge2\delta^0(D)
\ge t+2$.
Moreover,
$\ell(P_1)\ge \ell(C)>s$.
Hence, $P_1+ P_2$ contains a copy of $P(s,t)$ by Observation~\ref{sub-path}.

\smallskip
\noindent
$(iii).$
Finally, suppose that
$D[V(P)]$
contains a hamiltonian directed cycle.
Without loss of generality, we may assume that this cycle is $\langle v_1,v_2,\ldots, v_\ell \rangle$. By Observation~\ref{obs:endpoints},
 every in-neighbor of $v_1$ belongs to  $V(P)$. Accordingly, we have
$$N^+(v_\ell)\cup N^-(v_\ell)
	\subseteq
	V(P)\backslash\{v_\ell\}
	=
	X\cup Y\cup Z\backslash\{v_\ell\}.$$
On the other hand, one can infer that
$$|N^+(v_\ell)\cup N^-(v_\ell)|
	\ge
	2\delta^0(D)\ge t+2 \ge \left\lceil
	|X\cup Z\backslash\{v_\ell\}|/2
	\right\rceil+2,$$
since $D$ is a bipartite oriented graph. 
It is easily seen that
$|(N^+(v_\ell)\cup N^-(v_\ell))\cap Y|\ge2$.
Let $b$ be the smallest subscript such that
$v_b\in (N^+(v_\ell)\cup N^-(v_\ell))\cap Y$.
By $(i)$, we may assume that
$v_b\in N^-(v_\ell)$.
Denote by
\[
P_1=v_{b+1}v_{b+2}\ldots v_\ell\qquad \text{and}
\qquad
P_2=v_1v_2\ldots v_bv_\ell.
\]
Consequently, we can also derive from Observation~\ref{sub-path} that
$P_1+ P_2$ contains  a copy of $P(s,t)$.
\end{proof}

\section{Proof of Theorem~\ref{A}}

\noindent

Let $D$ be an arbitrary bipartite oriented graph with bipartite sets $V_1$ and $V_2$ of minimum semidegree at least $3k/8+2$, where $k\geq 2$. Our goal is to verify that $D$ contains every oriented path with two blocks of length $k$. Let us briefly demonstrate that it is sufficient to prove that $D$ contains a copy of $P(s,t)$ for any $s,t$ with $s+t=k$. Actually, assume the statement is true, by reversing every arc of $D$ we know the resulting bipartite oriented graph $D^*$ has minimum semidegree at least $3k/8+2$. Then by assumption, one can see that $D^*$ contains a copy of $P(s,t)$ for any $s,t$ with $s+t=k$, which follows that there is a $Q(s,t)$ in $D$, as desired. In the following, we will prove that $D$ indeed admits a copy of $P(s,t)$ for any $s,t$ with $s+t=k$.   

By symmetry, we may assume
without loss of generality that $s\ge t$. Hence, we infer that
$s\ge\lceil k/2\rceil$ and $t\le\lfloor k/2\rfloor$. Moreover, we can assume $k\geq 5$ since otherwise we are done by
Observation~\ref{lem:greedy}. Suppose to the contrary that there exists no $P(s,t)$ for some $s,t$ with $s\geq t$ and $s+t=k$. By using Lemma~\ref{longpc}, one can assume that $t\geq 1$.
It is convenient to denote $d:=\delta^0(D)\ge\frac{3k}{8}+2.$
In particular, $d>3k/8$.  Let
$P=v_1v_2\ldots v_\alpha$ be a longest directed path in $D$.  By
Observation~\ref{obs:endpoints},
$        N^-(v_1)\cup N^+(v_\alpha)\subseteq V(P).
$
By symmetry, we may assume without loss of generality that $v_1\in V_1$.  To proceed with the proof, we will divide the discussion into two parts based on the parity of the length of $P$.

\begin{center}
\textbf{Part I. The length of $P$ is odd  }
\end{center} 

\noindent

It is easily seen that $\alpha$ is even and $v_{\alpha}\in V_2$. Next, we would like to consider two cases as below.

\medskip
\noindent\textbf{Case 1.} $k/2\le s\le 2(k-1)/3$.
\medskip

Since $k/2\le s\le2(k-1)/3<2k/3$, we have $2k-s\leq 3k/2$.  Lemma~\ref{longpc} and the  hypothesis on $d$ give
\[
        \alpha\ge4d-1\ge\frac{3k}{2}+7\ge2k-s.
\]
Moreover, since $t\le k/2$, 
\[
        d\ge\frac{3k}{8}+2\ge\frac{t}{2}+1 \qquad \text{and}\qquad \alpha\ge2k-s\ge2k-2k/3\ge  2s+1.
\]
 Let $X=\{v_1,v_2,\dots, v_{t}\}$, $Y=\{v_{t+1},v_{t+2},\dots,v_{\alpha-t}\}$ and $Z=\{v_{\alpha-t+1},v_{\alpha-t+2}\dots,v_{\alpha}\}$. Notice that $v_i\in V_2$  if $ i\in[\alpha]$ is even.

If $N^-(v_1)\cap Y\ne\emptyset$ or $N^+(v_\alpha)\cap Y\ne\emptyset,$ then we can obtain, by Lemma~\ref{lemma} $(i)$, that there is a copy of $P(s,t)$, which leads to a contradiction.  Moreover, if $v_1\in N^+(v_\alpha)$, then $D[V(P)]$ contains a hamiltonian directed cycle. By Lemma~\ref{lemma} $(iii)$ we can also find a copy of $P(s,t)$, a contradiction. Hence, we may assume that $N^-(v_1)\subseteq X\cup(Z\backslash\{ v_\alpha\})$ and $N^+(v_\alpha)\subseteq (X\backslash\{v_1\})\cup Z$.

\begin{claim}\label{V_i}
There exists an even index
$
 i'\in
 \left[
   \alpha-k+\left\lceil\frac{3s}{2}\right\rceil,
   \alpha-1
 \right]
$
such that $v_{i'}\in N^-(v_1)$.
\end{claim}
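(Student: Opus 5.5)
Our approach is a counting argument on $N^-(v_1)$, combined with a short path construction that excludes in-neighbours lying too far from the end of $P$. We already know $N^-(v_1)\subseteq X\cup(Z\backslash\{v_\alpha\})$. Since $v_1\in V_1$ and $D$ is bipartite, every in-neighbour of $v_1$ lies in $V_2$, i.e.\ it is some $v_j$ with $j$ even. So it suffices to show that $N^-(v_1)\cap Z$ contains a vertex $v_j$ with $j\ge \alpha-k+\lceil 3s/2\rceil$. Note that $\alpha-k+\lceil 3s/2\rceil\ge \alpha-t+1$ is equivalent to $\lceil 3s/2\rceil\ge s+1$, which holds. So the interval in the claim is a terminal segment of $Z$ of length $k-\lceil 3s/2\rceil=t-\lceil s/2\rceil$, plus the index $\alpha-1$ region.

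\emph{Step 1: bound the in-neighbours in $X$.} The set $X$ contains only $\lfloor t/2\rfloor$ even-indexed vertices. Hence $|N^-(v_1)\cap Z|\ge d-\lfloor t/2\rfloor$.

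\emph{Step 2: forbid in-neighbours in the middle of $Z$.} Suppose $v_j\in N^-(v_1)$ with $\alpha-t<j<\alpha-k+\lceil 3s/2\rceil$. Then $C=\langle v_1,\dots,v_j\rangle$ is a directed cycle, and the tail $v_{j+1}\dots v_\alpha$ has length $\alpha-j-1$. We will use $N^+(v_\alpha)\subseteq (X\backslash\{v_1\})\cup Z$ together with the cycle. Take an out-neighbour $v_h$ of $v_\alpha$. If $h\le t$, rotate the cycle so that it ends at $v_h$. This gives two directed paths ending at $v_h$: one of them is $v_{j+1}\dots v_\alpha v_h$, and the other goes around $C$ (length up to $j-1$). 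Their lengths add to roughly $\alpha$, and the first has length about $\alpha-j\ge k-\lceil 3s/2\rceil$. We aim to choose the split so that one part has length $\ge s$ and the other $\ge t$, which yields a $P(s,t)$ and a contradiction. If instead $N^+(v_\alpha)\subseteq Z$, then we use the in-cycle analogously, as in the proof of Lemma~\ref{lemma}(iii) applied to the sub-path.

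\emph{Step 3: count.} Step 2 confines $N^-(v_1)\cap Z$ to the interval $[\alpha-k+\lceil 3s/2\rceil,\alpha-1]$. Step 1 together with $d>3k/8\ge t/2$ (since $t\le k/2$ gives $3k/8\ge 3t/4$) shows this set is nonempty. Any element of it gives the required even index $i'$.

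The main obstacle is Step 2: getting exactly the threshold $\lceil 3s/2\rceil$ from the two-path construction. If the direct construction fails, I would instead count more carefully. Roughly, the $d-\lfloor t/2\rfloor$ even in-neighbours in $Z$ cannot all fit among the $\lceil(\lceil 3s/2\rceil-s-1)/2\rceil$ even indices at the start of $Z$. This would need $d-\lfloor t/2\rfloor>(s/2)/2$, i.e.\ $d>t/2+s/4$, and that holds since $t/2+s/4\le 3k/8$ exactly when $t\le s$ (which we have, with slack from the $+2$). This pure counting route looks cleaner, and I would try it first, with Step 2 as the fallback.
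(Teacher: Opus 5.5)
Your final ``pure counting'' route is essentially the paper's proof, and it is complete on its own. If no even index in $[\alpha-k+\lceil 3s/2\rceil,\alpha-1]$ is an in-neighbour of $v_1$, then $N^-(v_1)$ lies among the even indices of $[2,t]\cup[\alpha-t+1,\alpha-k+\lceil 3s/2\rceil-1]$, whose second interval has $\lceil s/2\rceil-1$ elements. Hence $d^-(v_1)\le\lfloor t/2\rfloor+\lceil(\lceil s/2\rceil-1)/2\rceil\le t/2+s/4+1\le 3k/8+1<d$, using $s\ge t$. This exact chain is what your ``roughly'' estimate should become.

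Drop Step~2 entirely. The claim never requires excluding in-neighbours from the early part of $Z$; the paper chooses the threshold $\alpha-k+\lceil 3s/2\rceil$ for its later use in subcase $(a_2)$. The construction you sketch also cannot work. Once the other path uses the arc $v_jv_1$, the tail path $v_{j+1}\cdots v_\alpha v_h$ cannot extend back through $v_j$, and its length $\alpha-j<t\le s$ is too short for either block.
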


\begin{proof}
Suppose for the sake of contradiction that the assertion does not hold. Since
$
N^-(v_1)\subseteq X\cup(Z\backslash\{v_\alpha\}),
$
the indices of all in-neighbors of 
$v_1$ are even and belong to
\[
[2,t]\cup
\left[
 \alpha-t+1,\,
 \alpha-k+\left\lceil\frac{3s}{2}\right\rceil-1
\right].
\]
Observe that the second interval  has at most $ \left\lceil\frac{s}{2}\right\rceil-1$ indices. Recall that $s\ge k/2$, we thus have 
$$
d^-(v_1)
\le
 \left\lfloor\frac{t}{2}\right\rfloor
 +\left\lceil
   \frac{\lceil s/2\rceil-1}{2}
  \right\rceil\\
\le \frac{t}{2}+\frac{s}{4}+1\\
=\frac{k}{2}-\frac{s}{4}+1\\
\le\frac{3k}{8}+1
<d.$$
But this is impossible because $d^-(v_1)\geq \delta^0(D)=d$.
\end{proof}

By Lemma~\ref{lemma} $(ii)$, we can assume that
$N^-(v_{i'+1})\subseteq V(P)$. Suppose first that $N^-(v_{i'+1})\cap Y\cap V_2=\emptyset.$ It is not difficult to deduce that $|N^+(v_\alpha)\cap Z\cap V_1|\le \lceil \frac{t}{2}\rceil-1$ and $|N^-(v_{i'+1})\cap Z\cap V_2|\le \lceil \frac{t}{2}\rceil-1$. This indicates that
$$
|N^-(v_{i'+1})\cap X\cap V_2|
\ge d^-(v_{i'+1})-|Z\cap V_2|\\
\ge d-\left\lceil\frac{t}{2}\right\rceil
>\frac{t}{4}
=\frac{|X|}{4}.
$$
Similarly, one can also get
$$
|N^+(v_\alpha)\cap X\cap V_1|
\ge d^+(v_\alpha)-|Z\cap V_1|\\
\ge d-\left\lfloor\frac{t}{2}\right\rfloor
>\frac{|X|}{4}.$$

Now we define the set
 $$R=\{v_{j-1}:v_{j}\in N^+(v_{\alpha})\cap X\cap V_1\}.$$
It is obviously that $R\subseteq X\cap V_2$. Furthermore, since $v_{1}\notin N^+(v_\alpha)$, we obtain that $|R|=|N^+(v_\alpha)\cap X|$. Therefore, $|N^-(v_{i'+1})\cap X|$ and $|R|>|X|/4$ hold. According to the pigeonhole principle, we know that $R \cap (N^-(v_{i'+1})\cap X\cap V_2)\ne\emptyset$. Thus, there must exist  a vertex $v_h\in N^-(v_{i'+1})\cap X\cap V_2$ such that $v_{h+1}\in  N^+(v_{\alpha})\cap X\cap V_1$.  
Then
$$\langle v_1,v_2,\ldots ,v_h,v_{i'+1},\ldots ,v_\alpha,
v_{h+1},v_{h+2},\ldots ,v_{i'}\rangle
$$
is a hamiltonian directed cycle of $D[V(P)]$. By applying  Lemma~\ref{lemma} $(iii)$, we are able to find a copy of $P(s,t)$, which contradicts the assumption that there is no $P(s,t)$.

Thereby, we may assume that $ N^-(v_{i'+1})\cap Y\cap V_2\ne\emptyset$. Choose arbitrarily a vertex
$
v_h\in N^-(v_{i'+1})\cap Y\cap V_2.
$
  We will consider the following two subcases, i.e.,
$(a_1)$. $i'\ge h+s$; $(a_2)$. $i'\le h+s-1$.

For $(a_1)$, we denote by $$P_1=v_1v_2\dots v_{h}v_{i'+1}  \qquad \text{and} \qquad P_2=v_{h+1}v_{h+2}\dots v_{i'+1}.$$ It is easily seen that $\ell(P_1)\ge t$ and $\ell(P_2)=i'-h\ge s$ by the assumption that $i'\ge h+s$. By Observation~\ref{sub-path}, we know that $P_1+ P_2$ contains a copy of $P(s,t)$, a contradiction .

For $(a_2)$, we pick a vertex $v_{j'}\in N^+(v_\alpha)\cap V_1$ with $j'\in\{\lfloor s/2\rfloor+1,\lfloor s/2\rfloor+2,\dots,t\}$. The existence of such a vertex follows by a similar argument as that of Claim~\ref{V_i}. Indeed, if there is no such a vertex, then $N^+(v_\alpha)\subseteq((Z\backslash\{v_\alpha\})\cup  \{v_1,v_2,\dots, \lfloor s/2\rfloor\})\cap V_1$. 
So we have $$d^+(v_\alpha)\le \lceil t/2\rceil -1+ \lfloor s/4\rfloor\le k/2- \lceil s/4\rceil<\delta^0(D),$$ a contradiction.

Let $$P_1=v_{h+1}v_{h+2}\dots v_{i'} v_1 v_2\dots v_{j'} \qquad \text{and} \qquad P_2=v_{j'+1}v_{j'+2}\dots v_{h}v_{i'+1}v_{i'+2}\dots v_{\alpha}v_{j'}.$$ Since $j'\ge\lfloor s/2\rfloor+1 $, $i'\ge \alpha-k+\lceil 3s/2\rceil$ as well as $h\le \alpha-t$,  we have \[
i'-h-1\ge
\left\lceil\frac{s}{2}\right\rceil-1.
\]  Then $\ell(P_1)=i' - h - 1+j'\ge s$. On the other hand, 
since $i'\le h+s-1$, we can obtain that $h\ge i'-s+1$. As a consequence, we get
$$
\ell(P_2)
=\alpha-i'+h-j'\\
\ge
(2k-s)-i'+(i'-s+1)-t\\
=2k-2s-t+1\\
=t+1>t.
$$

This means that $P_1+ P_2$ contains a copy of $P(s,t)$ by Observation~\ref{sub-path}, leading to a contradiction.

\medskip
\noindent\textbf{Case 2. $s>2(k-1)/3$.}
\medskip

Recall that $d=\delta^0(D)\ge3k/8+2$ and $\alpha\ge4d-1\ge3k/2+7$ by Lemma~\ref{longpc}. Define the sets
\[
 J=\{v_{k-s+1},v_{k-s+2},\ldots,v_{\alpha-s}\}\qquad \text{and}
 \qquad
 K=\{v_{s+1},v_{s+2},\ldots,v_{\alpha-k+s}\}.
\]
Note that $|J|=|K|=\alpha-k$.  Moreover, $J\cap K\ne\emptyset$ if and only if
$\alpha\ge2s+1$, and in this case
\[
 J\cup K=\{v_{k-s+1},v_{k-s+2},\ldots,v_{\alpha-k+s}\}.
\]

Next, we will show that $N^+(v_\alpha)\cap(J\cup K)\cap V_1\ne\emptyset$. Suppose, on the contrary, that this is not the case. Then $N^+(v_\alpha)\cap(J\cup K)\cap V_1=\emptyset$. It can be derived from  Observation~\ref{obs:endpoints} that $N^+(v_\alpha)\subseteq V(P)\cap V_1$, implying that $N^+(v_\alpha)\subseteq (V(P)\backslash(J\cup K))\cap V_1$. Combining this with $\alpha\ge 3k/2+7$, we can obtain that if $J\cap K=\emptyset$, then the two intervals contain exactly $\alpha-k$ odd indices. Hence
$$
d^+(v_\alpha)\le \frac{\alpha}{2}-(\alpha-k)\\ \notag
= k-\frac{\alpha}{2}
\le \frac{k}{4}-\frac{7}{2},
$$
which contradicts that $\delta^0(D)>3k/8$. Additionally, if $J\cap K\ne\emptyset$, then $J\cup K$ contains exactly $(\alpha-2(k-s))/2$ odd indices. Therefore, we obtain that
$$
d^+(v_\alpha)\le\frac{\alpha}{2}-\frac{\alpha-2(k-s)}{2}\\ \notag
= k-s
\le \frac{k+1}{3}<\delta^0(D),$$
also a contradiction. This proves that $N^+(v_\alpha)\cap(J\cup K)\cap V_1\ne\emptyset$.

Let $v_i\in N^+(v_\alpha)\cap(J\cup K)\cap V_1$.  Denote by
\[
  P_1=v_1 v_{2}\dots v_{i}\qquad \text{and}
 \qquad
 P_2=v_{i+1}v_{i+2}\dots v_{\alpha} v_i,
\]
Clearly, $\ell(P_1)=i-1$ and $\ell(P_2)=\alpha-i$. If $v_i\in J$, then
$\ell(P_1)\ge k-s=t$ and $\ell(P_2)\ge s$. If $v_i\in K$, then
$\ell(P_1)\ge s$ and $\ell(P_2)\ge k-s=t$. In any case, we can derive from Observation~\ref{sub-path}that $P_1+ P_2$ contains a copy of $P(s,t)$, which leads to a contradiction.

This completes our proof of {\bf{Part I}}.

\begin{center}
\textbf{Part II. The length of $P$ is even}
\end{center} 

\noindent

By the assumption that $v_1\in V_1$, one easily sees that $v_\alpha\in V_1$. In particular, $\alpha$ is odd and $v_i\in V_2$  if and only if $ i$ is even. 
 By Lemma~\ref{longpc}, we have
\begin{equation}\label{eq:alpha-lower}
 \alpha\ge4d-1\ge\frac{3k}{2}+7.
\end{equation}
In particular, $\alpha\ge k+2$.

Set
\[
 R=\{v_{s+1},\ldots,v_{\alpha-t-1}\}\qquad \text{and}\qquad
 B=\{v_{t+1},\ldots,v_{\alpha-s-1}\}.
\]
It is easy to see that both sets are nonempty by \eqref{eq:alpha-lower}.

\begin{claim}\label{cl:middle-free}
$(i).$  $v_\alpha\nsim R$.
$(ii).$  $v_\alpha\nsim B$.
\end{claim}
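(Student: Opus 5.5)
The plan is to argue by contradiction, exactly as in the rest of the proof. We are working under the standing assumption that $D$ contains no copy of $P(s,t)$. Suppose $v_\alpha\sim v_j$ for some $v_j\in R\cup B$. In every case I will build two directed paths, each of length at least one, that meet only at a common terminal vertex. Their union $P_1+P_2$ is then a $P(s^*,t^*)$ with $s^*\ge s$ and $t^*\ge t$, so Observation~\ref{sub-path} produces a copy of $P(s,t)$. This is the desired contradiction.

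There are two possible orientations of the arc between $v_\alpha$ and $v_j$.

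\emph{First orientation: $v_\alpha\to v_j$.} Take the common terminal vertex to be $v_j$, and set $P_1=v_1v_2\ldots v_j$ and $P_2=v_{j+1}\ldots v_\alpha v_j$. Then $\ell(P_1)=j-1$ and $\ell(P_2)=\alpha-j$.
\begin{itemize}
\item If $v_j\in R$, then $j\ge s+1$ and $j\le\alpha-t-1$. This gives $\ell(P_1)\ge s$ and $\ell(P_2)\ge t+1$.
\item If $v_j\in B$, then $j\ge t+1$ and $j\le \alpha-s-1$. This gives $\ell(P_1)\ge t$ and $\ell(P_2)\ge s+1$.
\end{itemize}

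\emph{Second orientation: $v_j\to v_\alpha$.} Now the common terminal vertex is $v_\alpha$. Set $P_1=v_1\ldots v_jv_\alpha$ and $P_2=v_{j+1}\ldots v_\alpha$. Then $\ell(P_1)=j$ and $\ell(P_2)=\alpha-j-1$.
\begin{itemize}
\item For $v_j\in R$ we get $\ell(P_1)\ge s+1$ and $\ell(P_2)\ge t$.
\item For $v_j\in B$ we get $\ell(P_1)\ge t+1$ and $\ell(P_2)\ge s$.
\end{itemize}

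The index bounds $j\le\alpha-t-1$ and $j\le\alpha-s-1$ are precisely why $R$ and $B$ stop one step before $\alpha-t$ and $\alpha-s$. They guarantee that $v_{j+1}\neq v_\alpha$ (using $t\ge1$), so $P_2$ is a genuine path of length at least one. In both orientations the two paths share only their terminal vertex. Hence every case yields a $P(s,t)$, which proves both (i) and (ii).

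I do not expect a real obstacle here. The only points needing care are the index arithmetic at the boundary of $R$ and $B$, and the fact that an in-neighbour of $v_\alpha$ need not lie on $P$. The latter does not matter, because the claim concerns only vertices $v_j$ already on $P$. Note also that bipartiteness is not needed for this claim; it only becomes relevant later, when counting how many neighbours of $v_\alpha$ can lie outside $R\cup B$.
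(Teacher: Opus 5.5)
Your proof is correct and follows essentially the paper's argument. In each orientation you join two directed paths at a common terminal vertex ($v_j$ or $v_\alpha$). The paper's paths are just the length-$s$ and length-$t$ terminal segments of yours, so your appeal to Observation~\ref{sub-path} performs the truncation the paper does by hand. You also write out case (ii) explicitly, which the paper leaves as ``nearly identical''.
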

\begin{proof}

We only prove $(i)$, and the proof of $(ii)$ follows by a nearly identical argument.
Suppose that there exists a vertex $v_i\in R$ satisfying that $v_i\sim v_\alpha$.  If $v_i\to v_\alpha$, then let
$$P_1=v_{i-s+1}v_{i-s+2}\dots v_{i} v_{\alpha}  \qquad \text{and} \qquad P_2=v_{\alpha-t}v_{\alpha-t+1}\dots v_{\alpha}.$$ One easily finds that $\ell(P_1)= s$ and $\ell(P_2)= t$. Therefore, $P_1$ and $P_2$ form a
$P(s,t)$, a contradiction.  If $v_\alpha\to v_i$, then let
$$P_1=v_{i-s}v_{i-s+1}\dots v_{i}   \qquad \text{and} \qquad P_2=v_{\alpha-t+1}v_{\alpha-t+2}\dots v_{\alpha}v_i.$$  Thus, $P_1$ and $P_2$ form a
$P(s,t)$, also a  contradiction.
\end{proof}

\begin{claim}\label{cl:no-common-end}
There exists no vertex 
$v$ for which there are two longest directed paths $P_1$ and $P_2$ such that 
$v$ is the initial vertex of $P_1$ and the terminal vertex of $P_2$, and $V(P_1)=V(P_2)=V(P)$.
\end{claim}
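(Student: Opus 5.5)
The plan is to derive a contradiction from the degree of $v$: first show that all neighbours of $v$ lie on the path, then show that the path argument of Claim~\ref{cl:middle-free} excludes most of those positions.

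Suppose, for a contradiction, that such a vertex $v$ exists. Write $P_1=v u_2\ldots u_\alpha$ and $P_2=w_1w_2\ldots w_{\alpha-1}v$, where $V(P_1)=V(P_2)=V(P)$. Both are longest directed paths of even length $\alpha-1$, so both ends of each lie in the same part. Relabelling $V_1,V_2$ if necessary, we may take $v\in V_1$, so every neighbour of $v$ lies in $V_2$.

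\emph{Step 1: all neighbours of $v$ lie on $P$.} Applying Observation~\ref{obs:endpoints} to $P_1$, where $v$ is the initial vertex, gives $N^-(v)\subseteq V(P)$. Applying it to $P_2$, where $v$ is the terminal vertex, gives $N^+(v)\subseteq V(P)$. Hence $N^+(v)\cup N^-(v)\subseteq V(P)\setminus\{v\}$, and
\[
|N^+(v)\cup N^-(v)|\ge 2d\ge \tfrac{3k}{4}+4,
\]
since $D$ is oriented and the two neighbourhoods are disjoint.

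\emph{Step 2: $v$ avoids the middle of $P_2$.} The proof of Claim~\ref{cl:middle-free} uses only that the path is a directed path on $\alpha$ vertices. It therefore applies verbatim to $P_2$ with terminal vertex $v$. So $v$ is nonadjacent, in both directions, to
\[
R'=\{w_{s+1},\ldots,w_{\alpha-t-1}\}\quad\text{and}\quad B'=\{w_{t+1},\ldots,w_{\alpha-s-1}\}.
\]
Consequently every neighbour of $v$ is some $w_j$ with $j$ even, $j\le \alpha-1$, and $j\notin$ the index set of $R'\cup B'$.

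\emph{Step 3: counting, split by whether $\alpha\ge 2s+1$.}
\begin{itemize}
\item If $\alpha\ge 2s+1$, then $R'$ and $B'$ overlap or abut, and $R'\cup B'=\{w_{t+1},\ldots,w_{\alpha-t-1}\}$. The admissible indices are then $[1,t]\cup[\alpha-t,\alpha-1]$, which contain at most $t+1$ even numbers. Since $t\le k/2$, this gives $2d\le k/2+1<3k/4+4$, a contradiction.
\item If $\alpha\le 2s$, then $R'$ and $B'$ are disjoint, each of size $\alpha-k-1$. The admissible indices number at most $(\alpha-1)-2(\alpha-k-1)=2k-\alpha+1$. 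By \eqref{eq:alpha-lower} this is at most $k/2-6$, so the number of even admissible indices is below $3k/4+4$, again a contradiction.
\end{itemize}

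The main obstacle is the bookkeeping in Step 3: checking exactly when the index intervals of $R'$ and $B'$ overlap, and getting the parity count right at the boundary indices $t$, $\alpha-t-1$ and $\alpha-s-1$. There is slack of order $k/4$ in both cases, so off-by-one errors should not matter. One must also confirm that the constructions in Claim~\ref{cl:middle-free} remain valid when applied to $P_2$ rather than $P$. This holds because they only use the consecutive structure of the path and the fact that $v$ is its last vertex.
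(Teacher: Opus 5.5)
Your proof is correct and follows the paper's argument. Observation~\ref{obs:endpoints}, applied to both $P_1$ and $P_2$, puts $N^+(v)\cup N^-(v)$ inside $V(P)$; Claim~\ref{cl:middle-free}, applied to $P_2$, removes the middle; a parity count then gives the contradiction. The only difference is bookkeeping: the paper discards just the $\alpha-k-1$ vertices of $R'$, at least $\lfloor(\alpha-k-1)/2\rfloor$ of which are in the opposite part. This directly gives $d^+(v)+d^-(v)\le\frac{\alpha-1}{2}-\lfloor\frac{\alpha-k-1}{2}\rfloor\le\lceil k/2\rceil$, so your case split on $\alpha$ versus $2s+1$ (and the use of $B'$) is unnecessary.
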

\begin{proof}
Suppose $v$ is an initial vertex of $P_1$ and a terminal vertex of $P_2$. Let $P_2=v_1v_2\ldots v_{\alpha-1}v$, and $R=\{v_{s+1},\ldots,v_{\alpha-t-1}\}$, $B=\{v_{t+1},\ldots,v_{\alpha-s-1}\}$ as we defined before. From Observation~\ref{obs:endpoints} we can derive that
$N^+(v)\cup N^-(v)\subseteq V(P)$.  By Claim~\ref{cl:middle-free}, every vertex in $R\cup B$ is nonadjacent to $v$.
  Since $D$ is a bipartite oriented graph, there exist at least
$\lfloor(\alpha-k-1)/2\rfloor$ vertices belonging to different part from where $v$ is located which are nonadjacent to $v$.  Hence, we get
\[
 d^+(v)+d^-(v)
 \le \frac{\alpha-1}{2}-\left\lfloor\frac{\alpha-k-1}{2}\right\rfloor
 \le \left\lceil\frac{k}{2}\right\rceil.
\]
On the other hand,
$d^+(v)+d^-(v)\ge2d>3k/4>\lceil k/2\rceil$ for $k\ge5$. This leads to a contradiction.
\end{proof}

\begin{claim}\label{cl:boundary-arcs}
$v_\alpha\nrightarrow v_{\alpha-t}$ and
$v_\alpha\nrightarrow v_{\alpha-s}$.
\end{claim}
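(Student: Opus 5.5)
Both statements follow from one direct construction. If $v_\alpha$ sends an arc back onto $P$ at distance $t$ (resp.\ $s$) from the end, this closes a short directed cycle at the end of $P$. Opening that cycle at the receiving vertex gives a directed path of the right length ending there. The initial segment of $P$ then supplies a second directed path into the same vertex. Together they form a $P(s,t)$, contradicting the standing assumption that $D$ contains no $P(s,t)$. No degree counting is needed; we only use the lower bound \eqref{eq:alpha-lower} on $\alpha$.

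For the first statement, suppose for contradiction that $v_\alpha\to v_{\alpha-t}$. Set
\[
P_1=v_{\alpha-t+1}v_{\alpha-t+2}\ldots v_\alpha v_{\alpha-t}\qquad\text{and}\qquad P_2=v_{\alpha-t-s}v_{\alpha-t-s+1}\ldots v_{\alpha-t}.
\]
Then $P_1$ is a directed path of length $t$, using the arc $v_\alpha v_{\alpha-t}$. $P_2$ is a subpath of $P$ of length $s$, and it is well defined because $\alpha-t-s=\alpha-k\ge 2$ by \eqref{eq:alpha-lower}. Both paths terminate at $v_{\alpha-t}$. Their other vertices lie in the disjoint index ranges $[\alpha-t+1,\alpha]$ and $[\alpha-k,\alpha-t-1]$, so $v_{\alpha-t}$ is their unique common vertex. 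Hence $P_1+P_2$ is a copy of $P(s,t)$, which is the desired contradiction.

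For the second statement, suppose $v_\alpha\to v_{\alpha-s}$, and swap the roles of $s$ and $t$. Take
\[
P_1=v_{\alpha-s+1}\ldots v_\alpha v_{\alpha-s}\ \ (\text{length } s)\qquad\text{and}\qquad P_2=v_{\alpha-s-t}\ldots v_{\alpha-s}\ \ (\text{length } t).
\]
Again $\alpha-s-t=\alpha-k\ge 2$, and the two paths meet only in $v_{\alpha-s}$. So $P_1+P_2$ is a $P(s,t)$ (recall $P(s,t)\cong P(t,s)$), a contradiction.

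The only thing to check is that the two paths share only their common endpoint and that all indices are in range. Both follow from $\alpha\ge k+2$, so I do not expect a real obstacle. The case where $v_{\alpha-t}$ or $v_{\alpha-s}$ lies in $V_1$ needs no separate treatment: there the arc cannot exist since $D$ is bipartite, so the claim holds trivially.
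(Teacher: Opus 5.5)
Your proof is correct and is essentially the paper's argument. The paper uses the same two paths, $v_{\alpha-t+1}\ldots v_\alpha v_{\alpha-t}$ together with $v_{\alpha-k}\ldots v_{\alpha-t}$ (and the analogous pair with $s$ and $t$ swapped), glued at their common terminal vertex to form a $P(s,t)$.
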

\begin{proof}
If $v_\alpha\to v_{\alpha-t}$, then let
$$P_1=v_{\alpha-k}v_{\alpha-k+1}\dots v_{\alpha-t}  \qquad \text{and} \qquad P_2=v_{\alpha-t+1}v_{\alpha-t+2}\dots v_{\alpha}v_{\alpha-t}.$$ Clearly, one can check that $\ell(P_1)=(\alpha-t)-(\alpha-k)=s$ and $\ell(P_2)= t$, implying that $P_1$ and $P_2$ form a
$P(s,t)$, a contradiction.

 If $v_\alpha\to v_{\alpha-s}$, then let
 $$P_1=v_{\alpha-s+1}v_{\alpha-s+2}\dots v_{\alpha} v_{\alpha-s} \qquad \text{and} \qquad P_2=v_{\alpha-k}v_{\alpha-k+1}\dots v_{\alpha-s-1}v_{\alpha-s}.$$ It is easily seen  that $\ell(P_1)= s$ and $\ell(P_2)=(\alpha-s)-(\alpha-k)= t$. Therefore, $P_1$ and $P_2$ form a
$P(s,t)$, also a contradiction.
\end{proof}

Let
\[
 \gamma:=\min\{i:v_\alpha\to v_i\}\qquad  \text{and} \qquad
 \xi:=\max\{i:v_i\to v_1\}.
\]
The minimum semidegree condition yields the existence of the two indices, and both are even.
By Claims~\ref{cl:middle-free} and \ref{cl:boundary-arcs}, if $\gamma>s$, then
all out-neighbors of $v_\alpha$ on $P$ are contained in the set of even-indexed vertices of
$\{v_{\alpha-t+1},\ldots,v_{\alpha-2}\}$, and hence
$d^+(v_\alpha)\le\lfloor t/2\rfloor<d$, a contradiction.  Thus
\begin{equation}\label{eq:gamma-range}
        2\le\gamma\le s,
        \qquad \xi\le\alpha-1.
\end{equation}

\begin{claim}\label{cl:R-free-v1}
$N^-(v_1)\cap(R\cup\{v_{\alpha-t}\})=\emptyset$.
\end{claim}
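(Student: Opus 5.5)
The plan is to argue by contradiction: assume some $v_i\in N^-(v_1)$ with $s+1\le i\le \alpha-t$, and use it together with the out-neighbour $v_\gamma$ of $v_\alpha$ from \eqref{eq:gamma-range} to build two directed paths that meet only at their common terminal vertex $v_\gamma$. Observation~\ref{sub-path} then yields a copy of $P(s,t)$, which is the desired contradiction.

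\textbf{The two paths.} Recall that $v_\alpha\to v_\gamma$ with $2\le\gamma\le s$. Since $i\ge s+1>\gamma$, the indices satisfy $1\le \gamma<\gamma+1\le i<i+1\le\alpha$. Set
\[
P_1=v_{i+1}v_{i+2}\ldots v_\alpha v_\gamma \qquad\text{and}\qquad P_2=v_{\gamma+1}v_{\gamma+2}\ldots v_i v_1 v_2\ldots v_\gamma .
\]
The path $P_1$ uses the segment of $P$ after $v_i$ followed by the arc $v_\alpha v_\gamma$. The path $P_2$ runs along $P$ from $v_{\gamma+1}$ to $v_i$, then takes the arc $v_iv_1$, and then follows $P$ from $v_1$ to $v_\gamma$.

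\textbf{Disjointness.} The vertex sets are $V(P_1)=\{v_{i+1},\ldots,v_\alpha\}\cup\{v_\gamma\}$ and $V(P_2)=\{v_1,\ldots,v_i\}$. They intersect only in $v_\gamma$, which is the terminal vertex of both. Hence $P_1+P_2$ is a well-defined oriented path with two blocks.

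\textbf{Lengths.} We have
\[
\ell(P_1)=\alpha-i+1\ge t+1 \quad\text{since } i\le\alpha-t,
\]
\[
\ell(P_2)=(i-\gamma-1)+1+(\gamma-1)=i-1\ge s \quad\text{since } i\ge s+1.
\]
By Observation~\ref{sub-path}, $P_1+P_2$ contains a copy of $P(s,t)$, a contradiction. The endpoint case $i=\alpha-t$ is covered by the same construction, because then $\ell(P_1)=t+1$. This proves $N^-(v_1)\cap(R\cup\{v_{\alpha-t}\})=\emptyset$.

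The only delicate point is the choice of common terminal vertex. Gluing at $v_1$ or at $v_i$ fails: one would need a second in-neighbour of $v_1$, or a disjoint directed path into $v_i$, and neither is available. Routing both paths into $v_\gamma$ works because \eqref{eq:gamma-range} guarantees $\gamma\le s<i$. This inequality is exactly what is needed for the cycle $\langle v_1,\ldots,v_i\rangle$, opened at $v_\gamma$, to be disjoint from the tail segment of $P$ after $v_i$.
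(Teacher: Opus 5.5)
Your proof is correct and is essentially the paper's argument: both glue the arcs $v_i\to v_1$ and $v_\alpha\to v_\gamma$ at the common terminal vertex $v_\gamma$. The only difference is that the paper trims the two paths to exact lengths ($v_{i-s+\gamma}\ldots v_iv_1\ldots v_\gamma$ and $v_{\alpha-t+1}\ldots v_\alpha v_\gamma$), whereas you keep them maximal and invoke Observation~\ref{sub-path}. One small arithmetic slip, harmless since the bound still holds: $\ell(P_1)=\alpha-i$, not $\alpha-i+1$, so $\ell(P_1)\ge t$ (equal to $t$ when $i=\alpha-t$), which is all you need.
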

\begin{proof}
If $v_i\to v_1$ for some $v_i\in R\cup\{v_{\alpha-t}\}$, then let
$$P_1=v_{i-s+\gamma}v_{i-s+\gamma+1}\dots v_{i} v_1v_2\dots v_{\gamma} \qquad \text{and} \qquad P_2=v_{\alpha-t+1}v_{\alpha-t+2}\dots v_{\alpha} v_{\gamma}.$$
Therefore,  $P_1$ and $P_2$ form a
$P(s,t)$, a contradiction.
\end{proof}

\begin{claim}\label{cl:switch}
Suppose that $\gamma<q<\xi$, $q$ is even, and
$v_\alpha\to v_q$ and $v_{q-2}\to v_1$.  Then there are two longest directed
paths with vertex set $V(P)$ such that $v_{q-1}$ is the initial vertex of one
and the terminal vertex of the other.
\end{claim}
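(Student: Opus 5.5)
We will write down the two required paths explicitly, using only the arcs $v_\alpha\to v_q$ and $v_{q-2}\to v_1$ together with the path arcs of $P=v_1v_2\ldots v_\alpha$. The idea is to "rotate" $P$ at the two chords so that the segment around $v_{q-1}$ becomes an end segment.

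\textbf{The path ending at $v_{q-1}$.} Take
\[
P'=v_q v_{q+1}\ldots v_\alpha\, v_{\gamma}\ldots
\]
This fails, since $\gamma$ is not the relevant chord. Instead use the chord $v_{q-2}\to v_1$:
\[
P_2=v_q v_{q+1}\ldots v_{\alpha}\,?
\]
That also does not return. Better is to use both chords as follows. The path
\[
P_2=v_{q}v_{q+1}\ldots v_\alpha\ \text{(then } v_\alpha\to v_q\text{ closes a cycle)}
\]
is not usable either, so we argue more carefully:
\[
P_1=v_{q-1}v_q\ldots v_\alpha\ \text{is a tail segment,}
\]
and we must append $v_1\ldots v_{q-2}$ after $v_\alpha$. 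This requires an arc from $v_\alpha$ into $\{v_1,\ldots,v_{q-2}\}$ that permits traversal of all of them. Use $v_\alpha\to v_\gamma$ with $\gamma<q$: the path
\[
v_{q-1}v_q\ldots v_\alpha\, v_\gamma v_{\gamma+1}\ldots v_{q-2}\, v_1 v_2\ldots v_{\gamma-1}
\]
is a directed path on all of $V(P)$ starting at $v_{q-1}$. It uses $v_\alpha\to v_\gamma$ and $v_{q-2}\to v_1$, and $\gamma<q-1$ holds since $\gamma<q$ and both are even.

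\textbf{The path ending at $v_{q-1}$.} Symmetrically, use $v_\xi\to v_1$ with $\xi>q$, together with $v_\alpha\to v_q$:
\[
v_{\xi+1}\ldots v_\alpha\, v_q v_{q+1}\ldots v_\xi\, v_1v_2\ldots v_{q-1}.
\]
This uses $v_\alpha\to v_q$ and $v_\xi\to v_1$, covers all of $V(P)$, and terminates at $v_{q-1}$. Since $\xi\le\alpha-1$ by \eqref{eq:gamma-range}, the initial segment $v_{\xi+1}\ldots v_\alpha$ is nonempty. If $\xi+1=\alpha$ the path simply starts at $v_\alpha$, which is still fine.

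\textbf{Checks.} Both paths have $\alpha$ vertices, so they are longest directed paths with vertex set $V(P)$. Distinctness of vertices follows because the index ranges used partition $[\alpha]$. We also need the index inequalities $\gamma\le q-2$, so that the segment $v_\gamma\ldots v_{q-2}$ is nonempty, and $q+1\le \xi$; these follow from $\gamma<q<\xi$ together with all three indices being even. Note that the hypothesis $v_{q-2}\to v_1$ is used in the first path and $v_\alpha\to v_q$ in the second, while $\gamma$ and $\xi$ supply the complementary chords. The only real obstacle is choosing the right pairing of chords; once that is fixed, verification is routine bookkeeping of indices.
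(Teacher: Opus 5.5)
Your two paths are exactly the paper's $L_q$ and $M_q$. The one starting at $v_{q-1}$ uses $v_\alpha\to v_\gamma$ and $v_{q-2}\to v_1$, and the one ending at $v_{q-1}$ uses $v_\alpha\to v_q$ and $v_\xi\to v_1$, so the argument is correct and matches the paper; in a write-up, delete the abandoned false starts and correct the first heading to ``starting at $v_{q-1}$''.
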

\begin{proof}
Define
\[
\begin{split}
 L_q={}&v_{q-1}v_q\ldots v_\alpha v_\gamma v_{\gamma+1}\ldots
        v_{q-2}v_1v_2\ldots v_{\gamma-1},\\
 M_q={}&v_{\xi+1}v_{\xi+2}\ldots v_\alpha v_qv_{q+1}\ldots
        v_\xi v_1v_2\ldots v_{q-1}.
\end{split}
\]
Both $L_q$ and $M_q$ are directed paths covering all vertices of $P$.
Moreover, $v_{q-1}$ is the initial vertex of $L_q$ and the terminal vertex of $M_q$.
\end{proof}

We proceed by considering cases according to the value of $\gamma$.

\medskip
\noindent\textbf{Case 1. $2\le\gamma\le t$.}

\begin{claim}\label{cl:B-free-v1}
$N^-(v_1)\cap(B\cup\{v_{\alpha-s}\})=\emptyset$.
\end{claim}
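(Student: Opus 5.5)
The plan is to mirror the argument of Claim~\ref{cl:R-free-v1}, with the roles of $s$ and $t$ interchanged. The Case~1 hypothesis $\gamma\le t$ is exactly what makes the interchange possible.

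Suppose, for contradiction, that $v_i\to v_1$ for some $v_i\in B\cup\{v_{\alpha-s}\}$. Then $t+1\le i\le \alpha-s$. Recall that $v_\alpha\to v_\gamma$ by the definition of $\gamma$. Define
\[
P_1=v_{i-t+\gamma}v_{i-t+\gamma+1}\dots v_i v_1 v_2\dots v_\gamma \qquad\text{and}\qquad P_2=v_{\alpha-s+1}v_{\alpha-s+2}\dots v_\alpha v_\gamma .
\]
Both are directed paths built from arcs of $P$ together with the arcs $v_i\to v_1$ and $v_\alpha\to v_\gamma$.

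First, check the lengths.
\begin{itemize}
\item $P_1$ has length $(t-\gamma)+1+(\gamma-1)=t$. The first vertex is legitimate, since $\gamma\le t$ gives $i-t+\gamma\le i$.
\item $P_2$ has length $s$.
\end{itemize}

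Second, check that the two paths meet only in their common terminal vertex $v_\gamma$.
\begin{itemize}
\item Inside $P_1$, the segment $v_{i-t+\gamma}\dots v_i$ is disjoint from $v_1\dots v_\gamma$, because $i\ge t+1$ gives $i-t+\gamma\ge\gamma+1$.
\item All indices used in $P_1$ are at most $i\le\alpha-s$.
\item All indices of $P_2$ other than $\gamma$ are at least $\alpha-s+1$.
\item Finally, $\gamma\le t<\alpha-s+1$, using $\alpha\ge k+2$ from \eqref{eq:alpha-lower}.
\end{itemize}

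Hence $P_1+P_2$ is a copy of $P(t,s)$, which is isomorphic to $P(s,t)$. This contradicts the assumption that $D$ contains no $P(s,t)$.

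The only step needing care is the index bookkeeping above: that $P_1$ starts after $v_\gamma$ and that $P_2$ avoids $P_1$. Both follow from $t+1\le i\le\alpha-s$ and $2\le\gamma\le t$, so I do not expect a real obstacle.
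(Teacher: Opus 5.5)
Your proof is correct and is the paper's argument: it uses the same two directed paths $v_{\alpha-s+1}\dots v_\alpha v_\gamma$ and $v_{i-t+\gamma}\dots v_i v_1\dots v_\gamma$, of lengths $s$ and $t$, with the labels $P_1,P_2$ swapped. Your disjointness check, using $t+1\le i\le\alpha-s$, $2\le\gamma\le t$ and $\alpha\ge k+2$, supplies the index verification that the paper leaves implicit.
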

\begin{proof}
If $v_i\to v_1$ for some $v_i\in B\cup\{v_{\alpha-s}\}$, then let
$$P_1=v_{\alpha-s+1}v_{\alpha-s+2}\dots  v_{\alpha}v_{\gamma}  \qquad \text{and} \qquad P_2=v_{i-t+\gamma}v_{i-t+\gamma+1}\dots v_{i}v_1v_2\dots v_{\gamma}.$$ It is easy to find that $\ell(P_1)= s$ and $\ell(P_2)= t$. Therefore, $P_1$ and $P_2$ form a
$P(s,t)$, a contradiction.
\end{proof}

Since the set \{$v_1,\ldots,v_t$\} contains at most $\lfloor t/2\rfloor<d$ even-indexed vertices, we have $\xi>t$.  Claims~\ref{cl:R-free-v1} and
\ref{cl:B-free-v1} therefore imply
\begin{equation}\label{eq:xi-caseA}
 \xi\in[\alpha-s+1,s]\cup[\alpha-t+1,\alpha-1].
\end{equation}
Put
\[
 U=\{v_1,\ldots,v_\gamma\},\quad
 W=\{v_\xi,\ldots,v_\alpha\},\quad
 X_1=\{v_{\gamma+1},\ldots,v_t\},\quad
 Y_1=\{v_{\alpha-t},\ldots,v_{\xi-1}\},
\]
and $F=\{v_{\alpha-s},\ldots,v_s\}$.

For an interval $I\subseteq[\alpha]$, write
\[
 e(I):=|\{i\in I:i\text{ is even}\}|.
\]
All vertices in $N^-(v_1)\cup N^+(v_\alpha)$ have even indices.  We shall use
the following parity-restricted shifting rule.  For a set $A$ of even indices,
put
\[
 A-2:=\{i-2:i\in A\}.
\]
The map $i\mapsto i-2$ is injective and preserves parity.   We note that at most one shifted even index
crosses such an endpoint.  Notice also that
\begin{equation}\label{eq:end-arcs}
 v_2\notin N^-(v_1)\qquad\text{and}\qquad
 v_{\alpha-1}\notin N^+(v_\alpha),
\end{equation}
for otherwise a directed $2$-cycle would occur.

\smallskip

\noindent\textbf{Subcase 1. $F=\emptyset$.}

Since $F=\emptyset$, we have $\xi\in[\alpha-t+1,\alpha-1]$.  We claim that there exists an even index $q$ with $v_\alpha\to v_q$ and $v_{q-2}\to v_1$,
for which either
\[
q\in X_1,\ q-2\in X_1\cup\{\gamma\}
\qquad\text{or}\qquad
q\in Y_1,\ q-2\in Y_1.
\]
Indeed, suppose that no such $q$ exists.  Let
\[
 A_X=\{i:v_i\in N^+(v_\alpha)\cap X_1\},\qquad
 A_Y=\{i:v_i\in N^+(v_\alpha)\cap Y_1\},
\]
and put $x=|A_X|$ and $y=|A_Y|$.

For every $i\in A_X$, the index $i-2$ is even and belongs to $X_1\cup\{\gamma\}\subseteq[2,t]$. Since $v_\alpha\to v_i$, the assumption that no suitable $q$ exists implies that $v_{i-2}\notin N^-(v_1)$. Hence the $x$ distinct indices in $A_X-2$ exclude $x$ potential in-neighbors of $v_1$ among the even-indexed vertices of $[2,t]$. Consequently,
	
	$$
	\big|N^-(v_1)\cap (U\cup X_1)\cap V_2\big|
	\le e([2,t])-x.
	$$

For $A_Y$, all but possibly the first even index of $Y_1$ remain in $Y_1$ after the shift $i\mapsto i-2$. Since $v_\alpha\to v_i$ for every $i\in A_Y$, the assumption that no suitable $q$ exists implies that $v_{i-2}\notin N^-(v_1)$ for every shifted index $i-2$ that remains in $Y_1$. Let
	
	$$
	\eta_Y:=|(A_Y-2)\setminus[\alpha-t,\xi]|\in\{0,1\}.
	$$
	
	If $\eta_Y=1$, then necessarily $t$ is even, the exceptional index is $\alpha-t+1$, and its shift is $\alpha-t-1\in R$. Hence the $y-\eta_Y$ shifted indices of $A_Y$ lying in $[\alpha-t,\xi]$ exclude $y-\eta_Y$ potential in-neighbors of $v_1$. Consequently,
	
	$$
	\big|N^-(v_1)\cap (Y_1\cup\{v_\xi\})\cap V_2\big|
	\le e([\alpha-t,\xi])-y+\eta_Y.
	$$
	
	By the maximality of $\xi$ and Claims~\ref{cl:R-free-v1} and \ref{cl:B-free-v1}, every in-neighbor of $v_1$ lies in
	
	$$
	(U\cup X_1)\cup(Y_1\cup\{v_\xi\}).
	$$
	
	Therefore,	
	\begin{align}\notag
		d^-(v_1)&=|N^-(v_1)\cap (U\cup X_1)\cap V_2|+ |N^-(v_1)\cap (Y_1\cup \{v_\xi\})\cap V_2|\\
		&\le e([2,t])-x+e([\alpha-t,\xi])-y+\eta_Y\notag\\
		&\le e([2,t])-x+e([\alpha-t,\xi])-y+1.\label{eq:A1-in}
	\end{align}

By the minimality of $\gamma$ and Claim~\ref{cl:middle-free}, every
out-neighbor of $v_\alpha$ lies in
$\{v_\gamma\}\cup X_1\cup Y_1\cup W$.  The vertex $v_{\alpha-1}$ is not an
out-neighbor by \eqref{eq:end-arcs}.  Therefore
\begin{align} \notag
 d^+(v_\alpha)&=|N^+(v_\alpha)\cap (X_1\cup\{v_\gamma\})\cap V_2|+ |N^+(v_\alpha)\cap Y_1|+|N^+(v_\alpha)\cap W\cap V_2|\\
 &\le 1+x+y+e([\xi,\alpha])-1\notag\\
 &=x+y+e([\xi,\alpha]).\label{eq:A1-out}
\end{align}
Combining \eqref{eq:A1-in} and \eqref{eq:A1-out}, and recalling that $\xi$ is even, 
we obtain
\begin{align}
 d^-(v_1)+d^+(v_\alpha)
 &\le e([2,t])+e([\alpha-t,\xi])+e([\xi,\alpha])+1\notag\\
 &=e([2,t])+e([\alpha-t,\alpha])+2\notag\\
 &=t+2.\label{eq:A1-count}
\end{align}
On the other hand,
\[
 d^-(v_1)+d^+(v_\alpha)\ge2d\ge\frac{3k}{4}+4,
\]
whereas $t+2\le\lfloor k/2\rfloor+2<3k/4+4$, which leads to a contradiction.  Therefore, there exists $q$ satisfying that  $\gamma<q<\xi$, $q$ is even and
$v_\alpha\to v_q$ and $v_{q-2}\to v_1$.  By Claim~\ref{cl:switch}, there are two longest directed
paths with vertex set $V(P)$ such that $v_{q-1}$ is the initial vertex of one
and the terminal vertex of the other, contradicting Claim~\ref{cl:no-common-end}.

\smallskip

\noindent\textbf{Subcase 2. $F\ne\emptyset$.}
\smallskip

\noindent\emph{Subcase 2.1.}
$\alpha-s+1\le\xi\le s$.
\smallskip

Put
\[
 F_1=\{v_{\alpha-s},\ldots,v_{\xi-1}\}.
\]
We claim that there is an even index $q$ such that either
\[
 q\in X_1,\ q-2\in X_1\cup\{\gamma\},
 \qquad\text{or}\qquad
 q\in F_1,\ q-2\in F_1,
\]
and $v_\alpha\to v_q$, $v_{q-2}\to v_1$.  Indeed, suppose that no such
$q$ exists.  Put
\[
 A_X=\{i:v_i\in N^+(v_\alpha)\cap X_1\},\qquad
 A_F=\{i:v_i\in N^+(v_\alpha)\cap F_1\},
\]
and let $x=|A_X|$ and $f=|A_F|$.

For every $i\in A_X$, $i-2$ is an even index contained in $[2,t]$ with $v_{i-2}\notin N^-(v_1)$.
Hence the set $A_X-2 = \{i-2\mid i\in A_X\}$ consists of $x$ distinct indices, which exclude $x$ potential in‑neighbors of $v_1$ among the even‑indexed vertices of $[2,t]$.

For $A_F$, all but possibly the first even index of $F_1$ shift into
$[\alpha-s,\xi]$.  Define
\[
 \eta_F:=|(A_F-2)\backslash[\alpha-s,\xi]|\in\{0,1\}.
\]
If $i-2\in[\alpha-s,\xi]$ for some $i\in A_F$, we have
$i-2\in F_1$. Hence the assumption that no suitable $q$ exists gives
$v_{i-2}\notin N^-(v_1)$.  By the maximality of $\xi$ and
Claim~\ref{cl:B-free-v1}, all in-neighbors of $v_1$ lie in
$(U\cup X_1)\cup(F_1\cup\{v_\xi\})$.  Therefore
\begin{align}
 d^-(v_1)
 &=|N^-(v_1)\cap(U\cup X_1)\cap V_2|
   +|N^-(v_1)\cap(F_1\cup\{v_\xi\})\cap V_2|\notag\\
 &\le e([2,t])-x+e([\alpha-s,\xi])-f+\eta_F\notag\\
 &\le e([2,t])-x+e([\alpha-s,\xi])-f+1.
 \label{eq:A21-in}
\end{align}

By the minimality of $\gamma$ and Claim~\ref{cl:middle-free}, every
out-neighbor of $v_\alpha$ lies in
\[
 \{v_\gamma\}\cup X_1\cup F_1
 \cup\{v_\xi,\ldots,v_s\}
 \cup\{v_{\alpha-t},\ldots,v_\alpha\}.
\]
Moreover, $v_{\alpha-1}\notin N^+(v_\alpha)$ by \eqref{eq:end-arcs}.
Consequently,
\begin{align}
 d^+(v_\alpha)&=|N^+(v_\alpha)\cap( X_1\cup \{ v_\gamma \})\cap V_2| + |N^+(v_\alpha)\cap( F_1\cup\{v_\xi,\ldots,v_s\})\cap V_2| \notag\\  
& \quad +|N^+(v_\alpha)\cap\{v_{\alpha-t},\ldots,v_\alpha\}\cap V_2|\notag\\
 &\le 1+x+f+e([\xi,s])+e([\alpha-t,\alpha])-1\notag\\
 &=x+f+e([\xi,s])+e([\alpha-t,\alpha]).
 \label{eq:A21-out}
\end{align}
Combining with \eqref{eq:A21-in} and \eqref{eq:A21-out}, using the fact that $\xi$ is even
and $\eta_F\le1$, we have
\begin{align}
 d^-(v_1)+d^+(v_\alpha)
 &\le e([2,t])+e([\alpha-s,\xi])+e([\xi,s])
      +e([\alpha-t,\alpha])+1\notag\\
 &=e([2,t])+e([\alpha-s,s])+e([\alpha-t,\alpha])+2\notag\\
 &=t+\frac{2s-\alpha+1}{2}+2\notag\\
 &=\frac{2k-\alpha+5}{2}.
 \label{eq:A21-count}
\end{align}
By \eqref{eq:alpha-lower}, the last quantity is at most $k/4-1$, whereas
\[
 d^-(v_1)+d^+(v_\alpha)\ge2d\ge\frac{3k}{4}+4,
\]
a contradiction.  Hence such a $q$ exists.  Since $\gamma<q<\xi$ in both
alternatives, Claim~\ref{cl:no-common-end} yields the desired contradiction.

\smallskip
\noindent\emph{Subcase 2.2.}
$\alpha-t+1\le\xi\le\alpha-1$.

We claim that there is an even index
$q$ such that either
$$
q\in X_1,\ q-2\in X_1\cup\{\gamma\},
\qquad\text{or}\qquad
q\in F,\ q-2\in F,
\qquad\text{or}\qquad
q\in Y_1,\ q-2\in Y_1,
$$
and $v_\alpha\to v_q$, $v_{q-2}\to v_1$. Indeed, suppose that no such $q$ exists.
Put
\[
\begin{split}
 A_X&=\{i:v_i\in N^+(v_\alpha)\cap X_1\},\\
 A_F&=\{i:v_i\in N^+(v_\alpha)\cap F\},\\
 A_Y&=\{i:v_i\in N^+(v_\alpha)\cap Y_1\},
\end{split}
\]
and let $x=|A_X|$, $f=|A_F|$, and $y=|A_Y|$.

The shifts of all indices in $A_X$ remain in $[2,t]$.  For each of $A_F$
and $A_Y$, at most its first even index shifts across the left endpoint of
the corresponding interval.  
  Let
\[
 \eta_Y:=|(A_Y-2)\backslash[\alpha-t,\xi]|\in\{0,1\},  \qquad \eta_F:=
|(A_F-2)\backslash[\alpha-s,s]|
\in\{0,1\}.
\]
Consequently, by the maximality of $\xi$, and Claims~\ref{cl:R-free-v1} and
\ref{cl:B-free-v1},
\begin{align}\notag 
 d^-(v_1)& =|N^-(v_1)\cap (U\cup X_1)\cap V_2|+ |N^-(v_1)\cap F \cap V_2|+ |N^-(v_1)\cap (Y_1\cup \{v_{\xi}\})\cap V_2|\\  \notag
&\le e([2,t])-x+e([\alpha-s,s])-f+\eta_F
      +e([\alpha-t,\xi])-y+\eta_Y \\ 
 &\le e([2,t])-x+e([\alpha-s,s])-f+1
      +e([\alpha-t,\xi])-y+1.\label{eq:A2-in}
\end{align}
As in Subcase~A1, Claim~\ref{cl:middle-free}, the minimality of $\gamma$, and
\eqref{eq:end-arcs} imply
\begin{align}
 d^+(v_\alpha)
 &=|N^+(v_\alpha)\cap (X_1\cup\{v_\gamma\})\cap V_2|
   +|N^+(v_\alpha)\cap F\cap V_2| +|N^+(v_\alpha)\cap Y_1\cap V_2|\notag\\
 &\quad
   +|N^+(v_\alpha)\cap W\cap V_2|\notag\\
 &\le 1+x+f+y+e([\xi,\alpha])-1\notag\\
 &=x+f+y+e([\xi,\alpha]).
 \label{eq:A2-out}
\end{align}

Adding \eqref{eq:A2-in} and
\eqref{eq:A2-out}, we obtain
\begin{align}
 d^-(v_1)+d^+(v_\alpha)
 &\le e([2,t])+e([\alpha-s,s])
      +e([\alpha-t,\xi])+e([\xi,\alpha])+2\\ \notag
 &=t+\frac{2s-\alpha+1}{2}+3\notag\\
 &=\frac{2k-\alpha+7}{2}.\label{eq:A2-count}
\end{align}
By \eqref{eq:alpha-lower}, the last quantity is at most $k/4$, whereas the
left-hand side is at least $2d\ge3k/4+4$, a contradiction.  Therefore a
suitable $q$ exists, and Claim~\ref{cl:no-common-end} again yields a contradiction.

\medskip
\noindent\textbf{Case 2. $t<\gamma\le s$.}

\begin{claim}\label{cl:xi-gap}
$\xi\ge\gamma+4$.
\end{claim}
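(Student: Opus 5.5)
The plan is to argue by contradiction. By the definitions of $\gamma$ and $\xi$ both indices are even, so it suffices to rule out $\xi\le\gamma+2$. I expect the contradiction to come from counting out-neighbours of $v_\alpha$, with the in-neighbours of $v_1$ as a fallback.

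\emph{Step 1: locate the out-neighbours of $v_\alpha$.} By Observation~\ref{obs:endpoints}, every out-neighbour of $v_\alpha$ lies on $P$, has even index, and differs from $v_{\alpha-1}$ by \eqref{eq:end-arcs}. By Claim~\ref{cl:middle-free}, $v_\alpha$ is nonadjacent to
\[
R\cup B=\{v_{t+1},\ldots,v_{\alpha-s-1}\}\cup\{v_{s+1},\ldots,v_{\alpha-t-1}\}.
\]
By Claim~\ref{cl:boundary-arcs}, $v_\alpha\nrightarrow v_{\alpha-s}$ and $v_\alpha\nrightarrow v_{\alpha-t}$. The minimality of $\gamma$ together with $\gamma>t$ removes all indices below $\gamma$, in particular all of $[2,t]$. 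Hence every out-neighbour of $v_\alpha$ has even index in
\[
[\max\{\gamma,\alpha-s+1\},\,s]\ \cup\ [\alpha-t+1,\,\alpha-2].
\]
The first interval is nonempty only if $\alpha\le 2s$. Since $\gamma\le s$ is itself an out-neighbour index outside $R\cup B$, we indeed have $\gamma\ge\alpha-s+1$ in Case~2.

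\emph{Step 2: count.} From Step~1,
\[
d^+(v_\alpha)\le e([\alpha-s+1,s])+e([\alpha-t+1,\alpha-2])\le \frac{2s-\alpha}{2}+1+\frac t2 = \frac{2k-\alpha+2-t}{2}.
\]
Using \eqref{eq:alpha-lower}, $\alpha\ge 3k/2+7$, the right-hand side is at most $k/4-2$. This is less than $d\ge 3k/8+2$, a contradiction. If this holds as stated, Case~2 is already impossible, and $\xi\ge\gamma+4$ follows vacuously.

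\emph{Fallback.} I would double-check Step~1 carefully, since I expect it to be the main obstacle: the exact endpoints of $R$ and $B$, the parity bookkeeping, and whether $\alpha\le 2s$ versus $\alpha\ge 2s+1$ changes the shape of the allowed set. Should that count leave slack (for instance if the union $R\cup B$ is not as claimed), I would use the hypothesis $\xi\le\gamma+2$ directly. In that case $N^-(v_1)$ is contained in the even-indexed vertices of $\{v_4,\ldots,v_{\gamma+2}\}$, with $v_2$ excluded by \eqref{eq:end-arcs}, and Claim~\ref{cl:R-free-v1} removes indices $\ge s+1$. Adding this bound on $d^-(v_1)$ to the bound on $d^+(v_\alpha)$ from Step~1, the two counts overlap only around $[\gamma,\gamma+2]$, and their sum should fall below $2d\ge 3k/4+4$. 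If instead some $q\in\{\gamma+2\}$ has both $v_\alpha\to v_q$ and $v_{q-2}\to v_1$, then Claim~\ref{cl:switch} and Claim~\ref{cl:no-common-end} give the contradiction.
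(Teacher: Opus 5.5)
Your Steps 1--2 are correct, and they take a genuinely different route from the paper that proves something stronger.

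The paper argues two-sidedly. Using only $v_\alpha\nsim R$ (not $B$), it bounds $d^+(v_\alpha)\le e([\gamma,s])+e([\alpha-t,\alpha])-1\le\lceil (k-\gamma)/2\rceil$. It then uses the hypothesis $\xi\le\gamma+2$ to get $d^-(v_1)\le\gamma/2$, so that $d^-(v_1)+d^+(v_\alpha)\le\lceil k/2\rceil<2d$.

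You never look at $\xi$. You also use $v_\alpha\nsim B$ and $v_\alpha\nrightarrow v_{\alpha-s}$, and observe that an out-neighbour $v_\gamma$ with $t<\gamma\le s$ forces $\alpha-s+1\le\gamma\le s$. Hence all out-neighbours of $v_\alpha$ lie in the short window $[\alpha-s+1,s]$ together with $[\alpha-t+1,\alpha-2]$. With $\alpha\ge 3k/2+7$ this gives $d^+(v_\alpha)\le k/4-2<d$.

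So Case~2 of Part~II cannot occur at all. This makes not only this claim vacuous but also both subcases of Case~2, with their shifting arguments, superfluous.

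The price is that your count relies essentially on the global length bound $\alpha\ge 3k/2+7$ from Lemma~\ref{longpc}. The paper's proof of this particular claim is local: it needs only $\alpha>k$ and $2d>\lceil k/2\rceil$.

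Your fallback paragraph is therefore unnecessary. Its last sentence would not work as written anyway: Claim~\ref{cl:switch} requires $q<\xi$, which fails for $q=\gamma+2$ under the assumption $\xi\le\gamma+2$.
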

\begin{proof}
By the minimality of $\gamma$, Claim~\ref{cl:middle-free},
Claim~\ref{cl:boundary-arcs}, and \eqref{eq:end-arcs}, every out-neighbor of
$v_\alpha$ lies among the even-indexed vertices of
$[\gamma,s]\cup[\alpha-t,\alpha]$. Note that $v_{\alpha-1}\notin\ N^+(v_{\alpha})$. Hence
\begin{align}
 d^+(v_\alpha)
 &\le e([\gamma,s])+e([\alpha-t,\alpha])-1\notag\\
 &\le\left\lceil\frac{k-\gamma}{2}\right\rceil.\label{eq:xi-gap-out}
\end{align}
If $\xi\le\gamma+2$, then $\xi$ is even and $v_2\notin N^-(v_1)$, so
$d^-(v_1)\le\gamma/2$.  Therefore
\[
 d^-(v_1)+d^+(v_\alpha)
 \le\frac{\gamma}{2}+\left\lceil\frac{k-\gamma}{2}\right\rceil
 =\left\lceil\frac{k}{2}\right\rceil
 <\frac{3k}{4}+4\le2d,
\]
a contradiction.
\end{proof}

Together with Claim~\ref{cl:R-free-v1}, Claim~\ref{cl:xi-gap} implies
\[
 \xi\in[\gamma+4,s]\cup[\alpha-t+1,\alpha-1].
\]

\smallskip
\noindent\textbf{Subcase 1. $\gamma+4\le\xi\le s$.}

Let $X=\{v_{\gamma+1},\ldots,v_{\xi-1}\}$ and
$Y=\{v_{\alpha-t},\ldots,v_\alpha\}$.  Suppose that there is no even
$q\in X$ with $q-2\in X\cup\{\gamma\}$,
$v_\alpha\to v_q$, and $v_{q-2}\to v_1$.  Set
\[
 A_X=\{i:v_i\in N^+(v_\alpha)\cap X\},\qquad x=|A_X|.
\]
Every index in $A_X-2$ lies in $[2,\xi]$ and is excluded from the
in-neighborhood of $v_1$.  Since $\xi$ is the largest index of an
in-neighbor of $v_1$, we have
\begin{align}\notag
 d^-(v_1)&= |N^-(v_1)\cap (U\cup X\cup\{v_{\xi}\})\cap V_2| \\ 
& \le e([2,\xi])-x.\label{eq:B1-in} 
\end{align} 
By Claim~\ref{cl:middle-free}, all out-neighbors of $v_\alpha$ occur in
$\{v_\gamma\}\cup X\cup\{v_\xi,\ldots,v_s\}\cup Y$.  Since
$v_{\alpha-1}\notin N^+(v_\alpha)$,
\begin{align}\notag
 d^+(v_\alpha)&=|N^+(v_\alpha)\cap (X\cup\{v_\gamma\})\cap V_2|+ |N^+(v_\alpha)\cap Y\cap V_2|+ |N^+(v_\alpha)\cap\{v_\xi,\ldots,v_s\} \cap V_2|\\ \notag
 &\le1+x+e([\xi,s])+e([\alpha-t,\alpha])-1\notag\\
 &=x+e([\xi,s])+e([\alpha-t,\alpha]).\label{eq:B1-out}
\end{align}
Adding \eqref{eq:B1-in} and \eqref{eq:B1-out}, and noting that $\xi$ is
even, gives
\begin{align}
 d^-(v_1)+d^+(v_\alpha)
 &\le e([2,\xi])+e([\xi,s])+e([\alpha-t,\alpha])\notag\\
 &=e([2,s])+1+e([\alpha-t,\alpha])\notag\\
 &\le\left\lceil\frac{k}{2}\right\rceil+1
 <\frac{3k}{4}+4\le2d,
\end{align}
a contradiction.  Hence such a $q$ exists, which leads to a contradiciton with Claim~\ref{cl:no-common-end}.

\smallskip
\noindent\textbf{Subcase 2. $\alpha-t+1\le\xi\le\alpha-1$.}

Let
$
 X_2=\{v_{\gamma+1},\ldots,v_s\},
 Y_2=\{v_{\alpha-t},\ldots,v_{\xi-1}\}.
$
Suppose that there is no even index $q$ satisfying
$
v_\alpha\to v_q$ and $
v_{q-2}\to v_1,
$
such that either
\[
q\in X_2\quad\text{and}\quad q-2\in X_2\cup\{\gamma\},
\]
or
\[
q\in Y_2\quad\text{and}\quad q-2\in Y_2.
\]  Put
\[
 A_X=\{i:v_i\in N^+(v_\alpha)\cap X_2\},\qquad
 A_Y=\{i:v_i\in N^+(v_\alpha)\cap Y_2\},
\]
and $x=|A_X|$, $y=|A_Y|$.

All shifts from $A_X$ remain in $[2,s]$.  For $A_Y$, at most one shifted
index crosses the left endpoint $\alpha-t$.  Thus, Claim~\ref{cl:R-free-v1} and the maximality of $\xi$ give
\begin{align}\notag
 d^-(v_1)&= |N^-(v_1)\cap (U\cup X_2)\cap V_2|+ |N^-(v_1)\cap (Y_2\cup \{v_{\xi}\})\cap V_2| \\   
 &\le e([2,s])-x+e([\alpha-t,\xi])-y+1.\label{eq:B2-in}
\end{align}
Furthermore, by Claim~\ref{cl:middle-free}, the minimality of $\gamma$, and
\eqref{eq:end-arcs},
\begin{align}\notag
 d^+(v_\alpha)&=|N^+(v_\alpha)\cap (X_2\cup\{v_\gamma\})\cap V_2|+ |N^+(v_\alpha)\cap Y_2\cap V_2|+ |N^+(v_\alpha)\cap W\cap V_2|\\ \notag
 &\le1+x+y+e([\xi,\alpha])-1\notag\\
 &=x+y+e([\xi,\alpha]).\label{eq:B2-out}
\end{align}
Since $\xi$ is even, adding \eqref{eq:B2-in} and \eqref{eq:B2-out} yields
\begin{align}
 d^-(v_1)+d^+(v_\alpha)
 &\le e([2,s])+e([\alpha-t,\xi])+e([\xi,\alpha])+1\notag\\
 &=e([2,s])+e([\alpha-t,\alpha])+2\notag\\
 &\le\left\lceil\frac{k}{2}\right\rceil+2
 <\frac{3k}{4}+4\le2d,
\end{align}
a contradiction.  Therefore such a $q$ exists, and Claim~\ref{cl:no-common-end}
gives the final contradiction.

All cases lead to a contradiction.  Hence $D$ contains a copy of $P(s,t)$
when the two ends of the longest directed path belong to the same
bipartition class.  This completes the proof of {\bf {Part~II}}.

\vspace{0.2cm}

Combining the proofs of {\bf {Parts I and~II}}, we thus complete our proof of Theorem~\ref{A}.

\section{Concluding remarks}

 \noindent

In this paper, we prove that for any $k\geq 2$, every bipartite oriented graph $D$ with minimum semidegree at least $3k/8+2$ contains any oriented path with two blocks of length $k$, which can be regarded as an extension of some results on Conjecture~\ref{conj1} to the bipartite version. We believe that the lower bound is not tight, and it is very interesting to determine the corresponding threshold. More generally, we attempt to determine the minimum semidegree threshold for the existence of other oriented paths.

Jackson \cite{Jackson1981} proved that any oriented graph of minimum semidegree at least $k/2$ contains a directed path of length $k$, and Ayel~\cite{Ayel1982} showed that any bipartite oriented graph of minimum semidegree greater than $k/4$ contains a directed path of length $k$. 
Stein conjectured that every oriented graph of minimum semidegree greater than $k/2$ contains any oriented path of length $k$. Naturally, one may ask whether the threshold in the bipartite setting can be reduced by a factor of two. However, this is not the case. Let $G_{4,r}=G(X_1,Y_1,X_2,Y_2)$ be the $r$-regular bipartite tournament consisting of four independent sets $X_1,Y_1,X_2,Y_2$, each of which has size $r$, with all possible arcs directed cyclically as
\[
X_1\to Y_1,\qquad Y_1\to X_2,\qquad X_2\to Y_2,\qquad Y_2\to X_1.
\]
Observe that $\delta^0(G_{4,r})=r$. When $r=k/4$ for $k\equiv 0 \pmod 4$, one can see that $G_{4,k/4}$ contains no oriented path of length $k$. This indicates that the corresponding  minimum semidegree threshold in bipartite oriented graphs must be larger than $k/4$. Additionally, by taking $r=k/2$ for $k\equiv 0 \pmod 2$, we can see that there exists no anti-directed path of length $k$ in $G_{4,k/2}$ because the vertices of all anti-directed path must belong to $X_i\cup Y_j$ for some $i,j\in [2]$ with $i\neq j$. In general, the minimum semidegree threshold, in the bipartite setting, for forcing any oriented path with many blocks is closely related to the number of blocks. This leads to the following problem naturally.

\begin{problem}
Determine the smallest value $f(t)$ such that every bipartite oriented graph of minimum semidegree greater than $k/4+f(t)$ contains any oriented paths of length $k$ with $t$ blocks, where $k,t$ are integers with $t\leq k$.
\end{problem}

\subsection*{Acknowledgements}

 \noindent

The work was supported by the National Key R\&D Program of China (2023YFA1010203), the  Quantum Science and Technology--National Science and Technology Major Project (2021ZD0302902), and the National Natural Science Foundation of China (Nos. 12471336, 12501473).

\vskip 3mm
\end{spacing}
\end{document}